\newdefinition{definition}{Definition}
\newtheorem{theorem}{Theorem}
\newtheorem{proposition}[theorem]{Proposition}
\newproof{proof}{Proof}
\newtheorem{lemma}{Lemma}
\begin{document}
\begin{frontmatter}
\title{The solution of a generalized secretary problem via analytic expressions}
\author[rvt]{Adam Woryna}
\ead{adam.woryna@polsl.pl}
\address[rvt]{Silesian University of Technology, Institute of Mathematics, ul. Kaszubska 23, 44-100 Gliwice, Poland}

\begin{abstract}
Given integers $1\leq k<n$, the Gusein-Zade version of a generalized secretary problem  is to choose one of the $k$ best of $n$ candidates for a secretary, which are interviewing in random order. The stopping rule in the selection is based only on the relative ranks of the successive arrivals. It is known that  the  best policy can be described by a  non--decreasing sequence $(s_1, \ldots, s_k)$ of integers with $l\leq s_l<n$ for every $1\leq l\leq k$, and conversely, any such a sequence determines the general structure of the best policy. We found a finite analytic expression for the probability of success when using the optimal policy with a sequence $(s_1, \ldots, s_k)$. We also study the problem of the construction of the optimal sequence, i.e. a  sequence which maximizes  the  corresponding probability of success. We discovered finite analytic expressions which
enable to calculate the elements $s_l$  of an optimal sequence  one by one, from $l=k$ to $l=1$. Until now, such  expressions were derived separately, and only for the values $k\leq 3$.
\end{abstract}

\begin{keyword}
Secretary problem \sep optimal stopping \sep optimal sequence \sep analytic expression \sep combinatorial identity
\MSC[2010]60C05\sep 62P25\sep 05A19\sep 05A05\sep 33C90\sep 90C27
\end{keyword}
\end{frontmatter}

\section{Introduction and the main results}

In the paper we study the Gusein-Zade version of a generalized secretary problem (see~\cite{1}). There are many ways for presenting this optimal stopping problem. In the romantic version, instead of interviewing the candidates for a secretary, we have a bachelor  who has an occasion to meet a certain number of girls during his bachelorhood and who found out about this number (denoted further by $n$) in some miraculous way.   The bachelor wants to marry  one of the $k$ best girls, where $k$ is fixed and less than $n$. He can not be sure of success as he follows in his live the following principles: in every time he  gets to know only one of the  girls and after some time he must decide to marry her or to split up. In the latter case he starts to meet the next girl, but later on he can not go  back to any girl he decided to split up. The order in which he gets to know the girls is random, thus there are $n!$ equally likely orderings.
The bachelor is able to judge  only the present girl or the girls he met previously, and he has no idea about the attraction of the future girls. However, we assume that no two girls will turn out equally attractive for him. The problem is to find the best policy for the bachelor, i.e. the policy which maximizes the probability of the marriage to the girl that is one of the $k$ best. In the paper~\cite{1} it was proved the following general structure of such a policy:

\begin{proposition}[\cite{1}]\label{p1}
The best policy for a bachelor who wants to marry one of the $k$ best girls   is described by a certain non--decreasing sequence $(s_1, \ldots, s_k)$ of integers with $l\leq s_l<n$ for every $1\leq l\leq k$ in the following way: marry the $i_0$-th girl, where $1\leq i_0\leq n$ is the smallest integer such that there is $1\leq l\leq k$ which satisfies:
\begin{itemize}
\item[(i)] $s_l<i_0\leq s_{l+1}$ (assume $s_{k+1}:=n-1$),
\item[(ii)] the $i_0$-th girl is one of the $l$ best of $i_0$ girls met so far.
\end{itemize}
If such a number $i_0$ does not exist, then marry the $n$-th girl.
\end{proposition}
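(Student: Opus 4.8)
\medskip
\noindent\emph{Proof strategy.}
My plan is to derive Proposition~\ref{p1} from the classical optimal‑stopping analysis of the relative‑rank process. Write $L_i$ for the rank of the $i$-th arriving girl among the first $i$ (so $L_i=1$ means she is the best seen so far). For a uniformly random ordering, $L_1,\dots,L_n$ are independent with $L_i$ uniform on $\{1,\dots,i\}$, and after the $i$-th interview the bachelor's information is exactly $(L_1,\dots,L_i)$, equivalently the relative order of the first $i$ girls. If he marries the $i$-th girl when $L_i=l$, his success probability is $0$ for $l>k$ and, for $l\le k$,
\[
p(i,l):=\mathbb{P}\bigl(\text{the $l$-th best of a uniformly random $i$-element subset of }\{1,\dots,n\}\text{ is among the $k$ best}\bigr),
\]
because conditioning on the value of $L_i$ does not bias the (uniform) set of the first $i$ girls: given that set, each of its members is equally likely to occupy position~$i$. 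Since the observations are independent, the principle of optimality gives that an optimal rule marries the $i$-th girl iff $L_i\le k$ and $p(i,L_i)\ge V_{i+1}$, where $V_{i+1}$ is the deterministic optimal success probability obtainable from time $i+1$ on, with $V_{n+1}:=0$.

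The shape of this rule comes from three monotonicities:
\begin{itemize}
\item $p(i,l)$ is non-increasing in $l$: within each subset the $(l{+}1)$-st best has larger overall rank than the $l$-th best, hence is less likely to lie among the $k$ best.
\item $p(i,l)$ is non-decreasing in $i$: realize a uniform $i$-subset as $S=T\setminus\{x\}$ with $T$ a uniform $(i{+}1)$-subset and $x$ a uniform element of $T$; the $l$-th best of $T$ is at least as good as the $l$-th best of $S$ (they differ only when $x$ is among the top $l$ of $T$, and then the latter is worse), whence $p(i+1,l)\ge p(i,l)$.
\item $V_i$ is non-increasing in $i$: ``skip the $i$-th girl, then play optimally'' is an available strategy, so $V_i\ge V_{i+1}$.
\end{itemize}
Consequently, for each $i$ the set $\{l:1\le l\le k,\ p(i,l)\ge V_{i+1}\}$ is an initial segment $\{1,\dots,t(i)\}$ of $\{1,\dots,k\}$ (with $t(i):=0$ if it is empty), and $t(i)\le\min(i,k)$ since only ranks $\le i$ occur; moreover $t$ is non-decreasing, because as $i$ increases $p(i,l)$ does not decrease while $V_{i+1}$ does not increase. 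The optimal policy is therefore: marry the $i$-th girl iff $L_i\le t(i)$.

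It remains to encode the non-decreasing function $t\colon\{1,\dots,n\}\to\{0,1,\dots,k\}$ as a sequence. Set $t(0):=0$ and, for $1\le l\le k$,
\[
s_l:=\max\{\,i\in\{0,1,\dots,n-1\}:t(i)<l\,\}.
\]
Since $t(i)\le i$ we have $t(i)<l$ for every $i<l$; and $t(l)<l$ as well, because $p(l,l)=\binom{k}{l}/\binom{n}{l}$ is $<k/n\le V_{l+1}$ for $l\ge2$ and equals $k/n$ for $l=1$, where a short separate estimate (using $p(2,1)>k/n$, valid since $n>k$) gives $V_2>k/n$. Thus $\{0,1,\dots,l\}\subseteq\{i:t(i)<l\}$, so $l\le s_l\le n-1<n$, and $s_l\le s_{l+1}$ because the defining sets grow with $l$; with $s_{k+1}:=n-1$ the same formula applies, as $t(i)\le k<k+1$ always. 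Finally, $t$ being non-decreasing, for $1\le i_0\le n-1$ one checks that $s_l<i_0\le s_{l+1}$ holds (for the necessarily unique $l\le k$) exactly when $t(i_0)=l\ge1$; hence the event ``$L_{i_0}\le t(i_0)$'' coincides with ``there is $l$ with $s_l<i_0\le s_{l+1}$ and $L_{i_0}\le l$'', and marrying at the smallest index where this occurs — or, failing that, at position $n$ — is precisely the rule of Proposition~\ref{p1}.

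The dynamic‑programming skeleton is routine; the only genuine content is the reformulation of $p(i,l)$ as a subset probability, which makes the coupling for monotonicity in $i$ transparent, together with the boundary estimate $p(l,l)<k/n$ needed to guarantee $l\le s_l<n$. I expect the monotonicity of $p(i,l)$ in $i$ — or, if one prefers to avoid the coupling, the equivalent manipulation of the closed form $p(i,l)=\binom{n}{i}^{-1}\sum_{r=l}^{k}\binom{r-1}{l-1}\binom{n-r}{i-l}$ — to be the step that most needs care.
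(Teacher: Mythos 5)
The paper does not prove Proposition~\ref{p1} at all: it is quoted from Gusein--Zade~\cite{1} and taken as the starting point of the combinatorial analysis, so there is no in-paper proof to measure yours against. Judged on its own, your argument is correct and follows the classical dynamic-programming route (essentially that of~\cite{1} and of the $k\le 3$ literature cited in the introduction), which is entirely different in character from the rest of the paper, whose whole point is to avoid backward induction and instead count lucky permutations directly for an arbitrary threshold sequence. The load-bearing steps are all present and sound: the independence of the relative ranks $L_i$ makes the continuation value $V_{i+1}$ deterministic; the identification of the stopping reward with the subset probability $p(i,l)$ is the correct conditional computation, since the set $\{\pi(1),\dots,\pi(i)\}$ is independent of the relative order of the first $i$ arrivals; the three monotonicities (in $l$, in $i$ via the coupling $S=T\setminus\{x\}$, and of $V_i$) give a non-decreasing threshold function $t$, hence the interval structure (i)--(ii); and the boundary estimates $p(l,l)=\binom{k}{l}/\binom{n}{l}<k/n\le V_{l+1}$ for $l\ge 2$ together with the strict inequality $V_2>k/n$ (via $p(2,1)>k/n$) are exactly what is needed to secure $l\le s_l<n$ --- without the strict inequality at $l=1$ a tie would permit $s_1=0$. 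What your approach buys is a transparent, probabilistically motivated proof of the structural result; what the paper's combinatorial machinery buys, taking this structure as given, is the closed-form evaluation of the success probability and of the optimal thresholds, which the DP recursion alone does not deliver.
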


Given an arbitrary non--decreasing sequence $(s_1, \ldots, s_k)$ of integers such that $l\leq s_l<n$ for every $1\leq l\leq k$, it is natural to ask about the probability of success when using the above described policy with the sequence $(s_1, \ldots, s_k)$. Namely, we would like to know how  this probability depends on the elements of this  sequence and how to construct  a sequence  which maximizes this probability.

\begin{definition}\label{ddddef}
We call a sequence $(s_1, \ldots, s_k)$ which maximizes the probability of success for the policy described in Proposition~\ref{p1} as an {\it optimal sequence}.
\end{definition}

The classical version, i.e. the case $k=1$, was solved by Lindley (\cite{5}) by using equations arising from the principle of dynamic programming. He solved these equations by simple backward recursion and  obtained  that the only element  of an optimal sequence is equal to  the smallest integer $1\leq x\leq n-1$ such that  $H(n-1)-H(x)\leq 1$, where $H(x):=\sum_{j=1}^{x}1/j$ is the $x$-th harmonic number, as well as that  the probability of success when using  the optimal policy with an element $s_1$ is equal to $s_1(H(n-1)-H(s_1-1))/n$  (see also~\cite{13} for the survey paper). The cases $k=2,3$ were solved by using backward induction and exploiting the existence of an imbedded Markov chain. In the case $k=2$ the corresponding  analytic expressions were stated by Gilbert and Mosteller (\cite{4}) and the proof was  outlined by Dynkin and Yushkevich (\cite{14}). The case $k=3$ was derived by Quine and Law (\cite{2}).

In the present paper, we extended to an arbitrary value of $k$ the formula for an optimal sequence in the following way.

\begin{theorem}\label{mt}
Let $(s_1, \ldots, s_k)$ be a sequence such that $s_l=t_l+l-1$ for every $1\leq l\leq k$, where  each $t_l$  is defined as the smallest integer  $1\leq x\leq n-l$  satisfying  the  inequality
\begin{equation}\label{ineqqq}
d_l(x)\leq \frac{1}{l}\sum_{i=l}^{k-1}\left(\prod_{j=l+1}^i t_j\right)d_i(t_{i+1})+\frac{1}{l}\left(\prod_{j=l+1}^k t_j\right)\delta_{k,n},
\end{equation}
where
\begin{equation}\label{ajj}
\delta_{k,n}:=\left\{
\begin{array}{ll}
-\frac{1}{n}\cdot\sum_{j=1}^{n-1}\frac{1}{j},&if\;\;k=1,\\
\\
\frac{(n-k)!}{n!},&if\;\;k>1,
\end{array}
\right.
\end{equation}
and the map $d_l\colon \{1,\ldots, n-l\}\to\mathbb{R}$ is defined for each $0\leq l\leq k$ as follows:
\begin{equation}\label{aj}
d_l(x):=\left\{
\begin{array}{ll}
-1+\frac{k\cdot x}{n}+\frac{{n-x-1\choose k}}{{n\choose k}}+\frac{x}{{n\choose k}}\sum\limits_{j=1}^{x}\frac{{n-j-1\choose k-1}}{j},& if\;\;l=0,\\
\\
-\frac{k}{n}-\frac{1}{{n\choose k}}\sum\limits_{j=1}^{x}\frac{{n-j-1\choose k-1}}{j},&if\;\;l=1,\\
\\
\frac{k\cdot x!\cdot (n-l-x)!}{l\cdot (l-1)\cdot n!}\sum\limits_{j=0}^{l-2}{k-1\choose j}{n-k\choose x+l-j-1},&if\;\;1<l\leq k.
\end{array}
\right.
\end{equation}
Then $(s_1, \ldots, s_k)$ is an optimal sequence.
\end{theorem}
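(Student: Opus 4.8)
The plan is to set up the optimization over sequences $(s_1,\ldots,s_k)$ directly, exploiting the structure of the optimal policy described in Proposition~\ref{p1} together with a reparametrization $s_l = t_l + l - 1$ that turns the monotonicity constraint $l \le s_l < n$ into the cleaner condition $1 \le t_l \le n-l$ and a weak monotonicity of the $t_l$. First I would derive a closed formula for the probability of success $P(s_1,\ldots,s_k)$ of the policy in Proposition~\ref{p1}: conditioning on the arrival position $i_0$ at which the policy stops and on the relative rank there, one gets a sum over blocks $s_l < i_0 \le s_{l+1}$ of terms involving the probability that the $i_0$-th candidate is among the $l$ best so far \emph{and} turns out to be among the $k$ best overall. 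This is exactly the ``finite analytic expression for the probability of success'' the abstract promises; I expect it to be a double sum over $l$ and over $i_0$ in the $l$-th block, with binomial-coefficient weights coming from the hypergeometric count ``how many of the top $k$ overall appear among the first $i_0$.'' The maps $d_l(x)$ in \eqref{aj} should arise as the discrete partial derivatives (forward differences in $s_l$) of this probability; the three cases ($l=0$, $l=1$, $1<l\le k$) reflect boundary effects at the two ends of the sequence.

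Next I would exploit that optimizing a function of several ordered integer variables, when the objective is separable-in-blocks in the right way, can be done coordinate-wise from the last variable backward — this is the classical ``one-stage look-ahead'' / backward-induction phenomenon already used for $k\le 3$. Concretely, fixing $s_1,\ldots,s_{l-1}$, the dependence of $P$ on $s_l$ (with $s_{l+1},\ldots,s_k$ already chosen optimally) should be such that $P(\ldots,s_l,\ldots) - P(\ldots,s_l+1,\ldots)$ has a constant sign that switches exactly once as $s_l$ increases, so the optimal $s_l$ is the smallest $x$ where the increment stops being beneficial. Writing out that increment and collecting terms, the threshold condition becomes precisely inequality \eqref{ineqqq}: the left side $d_l(x)$ is the ``local'' marginal contribution, while the right side $\frac1l\sum_{i=l}^{k-1}(\prod_{j=l+1}^{i}t_j)d_i(t_{i+1}) + \frac1l(\prod_{j=l+1}^{k}t_j)\delta_{k,n}$ is the aggregated effect of the already-optimized tail $(t_{l+1},\ldots,t_k)$, with the products $\prod_{j=l+1}^{i}t_j$ accounting for the conditional probabilities of ``surviving'' through blocks $l+1,\ldots,i$ before stopping in block $i$, and $\delta_{k,n}$ capturing the boundary term from forcing a choice at position $n$. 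I would prove this by downward induction on $l$: the base case $l=k$ uses the $k=1$-type computation (yielding $\delta_{k,n}$ and $d_k$), and the inductive step substitutes the induction hypothesis ``$s_j=t_j+j-1$ with $t_j$ optimal for $j>l$'' into the increment formula and simplifies.

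The main obstacle — and the real content beyond the probabilistic set-up — will be the combinatorial simplification: showing that the telescoping/recursive structure of $P(s_1,\ldots,s_k)$ collapses to exactly the compact right-hand side of \eqref{ineqqq}, and in particular that the tail contribution factors through the simple products $\prod_{j=l+1}^{i} t_j$ rather than something messier. This is where the binomial identities flagged in the keywords (``combinatorial identity'', MSC 05A19) come in: one must verify identities of the form $\sum_j \binom{k-1}{j}\binom{n-k}{x+l-j-1} = \binom{n-1}{x+l-1}$ (Vandermonde-type) and related sums to collapse the nested double sums in the raw probability into the three-case formula for $d_l$, and to check that passing from $d_l(t_{l+1})$ in one term to $d_{l+1}$ in the next is consistent. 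I would organize this as a separate lemma giving the closed form of $P$, then a lemma establishing the single sign-change of the increment, then assemble the backward induction; the routine-but-lengthy binomial manipulations I would relegate to the verification of those intermediate identities rather than spell out inline.
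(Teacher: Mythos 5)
Your outline follows the same broad route as the paper (a closed combinatorial formula for the success probability, backward coordinate\nobreakdash-wise optimization, a threshold condition read off from forward differences, Vandermonde-type identities to collapse the sums), but as it stands it has two genuine gaps that carry most of the weight of the actual argument. First, you assert that the objective can be optimized ``coordinate-wise from the last variable backward,'' but you never isolate the structural property that makes this legitimate. What is needed is that for each $l$ one can write $T(w)=A\cdot F_{l,\sigma^l(w)}(x_l)+B$ with $A>0$ and $B$ independent of $x_l$, where $F_{l,\sigma^l(w)}$ depends only on $x_l$ and on the \emph{tail} $(x_{l+1},\ldots,x_k)$ --- so that the optimal $x_l$ does not depend on $x_1,\ldots,x_{l-1}$ at all (the paper's Lemma~\ref{prop9}); and one must then show that simultaneous coordinate-wise optimality actually implies global optimality, which is done by interpolating between the candidate $w$ and an arbitrary maximizer one coordinate at a time (Proposition~\ref{p8}). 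Without these two steps the phrase ``fixing $s_1,\ldots,s_{l-1}$'' in your plan is circular: those entries are not yet determined when you choose $s_l$, and a coordinate-wise stationary point of a multivariate discrete function need not be a maximum in general. Your single-sign-change claim for the increment also rests on the monotonicity of the maps $d_l$ (equivalently $c_l$), which is itself a nontrivial consequence of the identity $l\,c_l(x)=c_{l-1}(x-1)-c_{l-1}(x)$; you flag it but do not supply it.

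Second, and more seriously, you never address the constraint that the policy of Proposition~\ref{p1} is only defined for \emph{non-decreasing} sequences. Your reparametrization $s_l=t_l+l-1$ does not dissolve this constraint: $s_{l-1}\le s_l$ becomes $t_{l-1}\le t_l+1$, which couples adjacent coordinates and is not implied by $1\le t_l\le n-l$. The backward recursion produces each $s_l$ with no reference to $s_{l-1}$, so a priori the resulting sequence could fail to be admissible. The paper resolves this by extending the objective $T$ to \emph{all} of $X^{(k)}$, optimizing without the monotonicity constraint, and then proving a posteriori that any unconstrained maximizer is automatically non-decreasing (Proposition~\ref{trud}); that proof is a genuinely delicate case analysis using the positivity of the $D_l$ and the identity $(x+1-l)c_l(x)-x\,c_l(x+1)=\binom{n-x-1}{k-l}/\bigl(l!\binom{n}{k}\bigr)$, not a routine verification. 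Your plan would need either this step or an argument that the constrained and unconstrained optima coincide; omitting it leaves the conclusion ``$(s_1,\ldots,s_k)$ is an optimal sequence'' unproved even granting everything else.
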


In the right side of (\ref{ineqqq}) we use the standard conventions for  the empty sum and the empty product and evaluate them to 0 and to 1, respectively. In particular, for $l=k$ the right side of (\ref{ineqqq}) just equals $\delta_{k,n}/k$. Hence, the above formula allows to  calculate the elements $s_l$ ($1\leq l\leq k$) one by one, from $l=k$ down to $l=1$.

In the present paper we also proved the following
\begin{theorem}\label{mtt}
The probability of success for the policy described in  Proposition~\ref{p1} is equal to
\begin{equation}\label{expr}
-\sum\limits_{i=0}^{k-1}\left(\prod_{j=1}^i t_j\right)d_i(t_{i+1})-\left(\prod_{j=1}^k t_j\right)\delta_{k,n},
\end{equation}
where $t_l:=s_l-l+1$ for $1\leq l\leq k$.
\end{theorem}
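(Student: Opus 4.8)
The plan is to compute the success probability head-on in the relative-rank model and then recognise the resulting finite sum as the expression in (\ref{expr}). I would first replace the random arrival order by the independent random variables $\rho_1,\dots,\rho_n$, where $\rho_i$ is uniform on $\{1,\dots,i\}$ and records the rank of the $i$-th arrival among the first $i$; the policy of Proposition~\ref{p1} then stops at the first index $i$ lying in a block $s_l<i\le s_{l+1}$ with $\rho_i\le l$, or at $i=n$. The structural fact I would use is that, given $\rho_i$, the absolute rank $A_i$ of the $i$-th arrival is a deterministic function of $(\rho_i,\rho_{i+1},\dots,\rho_n)$, hence independent of the past. Combining this with $\{\tau=i\}\in\sigma(\rho_1,\dots,\rho_i)$ and the independence of the $\rho_i$, the probability of success becomes
\begin{equation}\label{pf-decomp}
\sum_{l=1}^{k}\ \sum_{i=s_l+1}^{s_{l+1}} p_i\cdot\frac1i\sum_{r=1}^{l} q_i(r)\ +\ p_n\cdot\frac kn ,
\end{equation}
where $s_{k+1}=n-1$, $p_i=\Pr(\text{no stop before }i)=\bigl(\prod_{m=1}^{l-1}\prod_{i'=s_m+1}^{s_{m+1}}\tfrac{i'-m}{i'}\bigr)\prod_{i'=s_l+1}^{i-1}\tfrac{i'-l}{i'}$ for $i$ in block $l$, and $q_i(r)=\Pr(A_i\le k\mid\rho_i=r)=\binom ni^{-1}\sum_{a=r}^{k}\binom{a-1}{r-1}\binom{n-a}{i-r}$.

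Next I would substitute $t_l=s_l-l+1$ and convert the products in $p_i$ into factorials; the part of $p_i$ coming from the earlier blocks collapses to $\frac{(t_l-1)!}{(t_l+l-1)!}\prod_{j=1}^{l}t_j$, which is the source of the products $\prod_{j=1}^{i}t_j$ in (\ref{expr}). I would then evaluate (\ref{pf-decomp}) one block at a time: the sum over $i$ inside block $l$ is handled by finding an explicit antiderivative of the summand, after which the whole expression unfolds into a telescoping-type sum over $l=1,\dots,k$. This is the step that turns the inner sums into the harmonic-type quantities $\sum_{j=1}^{x}\binom{n-j-1}{k-1}/j$ at the lowest levels and into the pure binomial sums $\sum_{j=0}^{l-2}\binom{k-1}{j}\binom{n-k}{x+l-j-1}$ at the higher ones, while the forced stop at position $n$ together with the last block produces the term with $\delta_{k,n}$; the two cases in (\ref{ajj}) simply record whether a harmonic sum survives ($k=1$) or cancels ($k>1$). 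Throughout I would keep the empty-sum and empty-product conventions in view so that the bookkeeping bottoms out at $l=k$ exactly as described under (\ref{ineqqq}).

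The hard part is the combinatorial core of this. First one collapses the double sum $\sum_{r=1}^{l}\sum_{a=r}^{k}\binom{a-1}{r-1}\binom{n-a}{i-r}$ by Vandermonde's convolution; then one must perform the within-block summation over $i$ — essentially a summation by parts against $\binom ni^{-1}$ — and show that it lands precisely on the hypergeometric-type expression in the third line of (\ref{aj}). The levels $l=0$ and $l=1$ have to be treated separately, since there the summation by parts produces harmonic numbers rather than closing up into a single binomial; this is exactly why the first two lines of (\ref{aj}) look different. I expect the verification of these binomial identities, rather than any conceptual subtlety, to be where the real effort lies. As a calibration of signs and edge cases, I would first check that the entire computation, specialised to $k=1$, reproduces Lindley's value $s_1\bigl(H(n-1)-H(s_1-1)\bigr)/n$.
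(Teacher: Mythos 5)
Your plan is correct and it reaches the paper's intermediate formula by a genuinely different first step. The paper computes the success probability by directly counting lucky permutations: it characterizes and enumerates the sets $S(Y,Y',l_0,i_0)$ of permutations for which $l_0$ is the smallest threshold and $i_0$ the smallest stopping index (Propositions~\ref{pomol1}--\ref{lolip2}), plus a separate count for permutations without thresholds (Proposition~\ref{pomol3}). You instead exploit the independence of the relative ranks $\rho_1,\dots,\rho_n$, which makes the first-stopping-time structure transparent and yields the block decomposition with no case analysis; your inner sum $\sum_{r=1}^{l}\sum_{a=r}^{k}{a-1\choose r-1}{n-a\choose i-r}$ is exactly the paper's $\lambda(i,l)=\sum_{j}{k\choose j}{n-k\choose i-j}\mu_{j,l}$ counted the other way round, and your factorial collapse of $p_i$ reproduces the factor $\frac{(i-l-1)!}{(i-1)!}\prod_{j=1}^{l}t_j$ implicit in Proposition~\ref{lolip2} (I checked the identity $\frac{(t_l-1)!}{s_l!}\prod_{j=1}^{l}t_j=\frac{t_l!}{s_l!}\prod_{j=1}^{l-1}t_j$; it is right). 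The probabilistic route buys a much shorter derivation of the analogue of Theorem~\ref{tt1}; the paper's route buys a purely combinatorial, bijective argument, which is its stated aim. From that point on the two proofs necessarily converge: the ``antiderivative'' you invoke for the within-block sum is the identity $l\,r_l(x)=r_{l-1}(x-1)-r_{l-1}(x)$ of Lemma~\ref{propppp}(iii), and the passage from the telescoped $r$-form to the nested expression (\ref{expr}) in the maps $d_i$ is the recursion $D_l(w)=c_l(x_1)+(x_1-l)\,D_{l+1}(\sigma(w))$ of Lemma~\ref{prop8}(ii), unwound in Proposition~\ref{proo8}. That chain of binomial identities is the bulk of Section~\ref{sqa1}; you correctly locate the real effort there but leave it unexecuted, so what you have is a sound alternative derivation of the counting theorem followed by a placeholder for the same algebraic reduction the paper carries out in full.
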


The known  constructions of the optimal sequence via analytic expressions were presented in  the similar form as in Theorem~\ref{mt} but, as we have mentioned above, only in the cases $k=1,2,3$. For  higher  values of  $k$, as well as for some other versions of this problem, the algorithms  computing the probability of success and the elements of an optimal sequence can be found in various of papers (see~\cite{15,8,7,10,11,16,3,1,9,17}). However, in contrast to the analytic solution, these methods apply mechanisms via dynamic or linear programming, and hence only numerically allow to  determine the elements of an optimal sequence.

\section{The strategy of the proofs}

Our proofs are purely elementary and only combinatorial arguments are used. At first, since no two girls are equally attractive for the bachelor, we  assign the rank  to each girl, which is an integer from 1 to $n$, i.e.  the rank 1 to the best girl, the rank 2 to the next best girl and so on. Then each of the possible $n!$ orderings of the girls defines uniquely  a permutation $\pi$ of the set $\{1,\ldots,  n\}$ such that $\pi(i)$ is the rank of the $i$-th girl for every $i\in\{1,\ldots, n\}$, and conversely, any permutation $\pi$ of the set $\{1,\ldots,  n\}$ defines in the obvious way the possible ordering of the girls.

Let   $w=(s_1, \ldots, s_k)$ be a non-decreasing sequence such that   $l\leq s_l<n$ for every $l\in\{1,\ldots, k\}$ and let us assume that the  policy from Proposition~\ref{p1} is used with the  sequence $w$. Let $\pi$ be a permutation defining the ordering of the girls. If the policy successfully chooses a candidate of top $k$, then we say that $\pi$ is a {\it lucky permutation corresponding to $w$}. We distinguish the case when there exists  $l\in\{1,\ldots, k\}$  such that for some   $i\in\{1,\ldots, n-1\}$  the  following two conditions hold:
\begin{itemize}
\item  $s_l<i\leq s_{l+1}$,
\item among the first $i$ girls, there are at most $l-1$ girls which are  more attractive than the $i$-th girl (equivalently,  the set $\{\pi(1), \ldots, \pi(i)\}$ contains at most $l$ elements which are not greater than $\pi(i)$).
\end{itemize}
We call such a number $l$ a {\it $w$-threshold} of the permutation $\pi$, and the corresponding number $i$ we call a {\it $(\pi, l, w)$-element} (see also Definition~\ref{deee1} in Section~\ref{s3}). Let now  assume  that  the permutation $\pi$ has  a $w$-threshold. If  $l_0\in\{1,\ldots, k\}$ is the smallest $w$-threshold of $\pi$,  then the bachelor using the policy will marry to the $i_0$-th girl, where $i_0$ is the smallest $(\pi,l_0,w)$-element. Otherwise (i.e. when $\pi$ has no $w$-thresholds), the bachelor will marry to the $n$-th  girl. In particular, the set $\Pi_{w}$ of all  lucky permutations corresponding to the sequence $w$ naturally splits into two subsets: the subset $\Pi_{w,1}$ of   permutations having a $w$-threshold and the subset $\Pi_{w,2}$ of  permutations without $w$-thresholds. Obviously, the probability of success when using the policy is equal to the  ratio
$$
\frac{|\Pi_{w}|}{n!}=\frac{|\Pi_{w,1}|}{n!}+\frac{|\Pi_{w,2}|}{n!}.
$$

For every $l\in\{1,\ldots, k\}$ we  define the following sets:
$$
X_l:=\{l,\ldots,n-1\},\;\;\;X^{(l)}:=X_{k-l+1}\times\ldots\times X_k,
$$
and the set   $X^{(0)}:=\{\epsilon\}$, where $\epsilon$ is the empty sequence. Further, we refer  to  the elements of  the sets $X^{(l)}$ as  {\it words} and to the elements of the sets $X_l$ as  {\it letters}.

In Theorem~\ref{tt1} (Section~\ref{s3}), we provide for every non-decreasing sequence $w=(x_1, \ldots, x_k)\in X^{(k)}$ the analytic formulae for the cardinalities of  the sets $\Pi_{w,1}$ and $\Pi_{w,2}$. To derive the formula for  $|\Pi_{w,1}|$, we consider for each $1\leq l\leq k$ and  $x_l<i\leq x_{l+1}$ the subset $S(l,i)\subseteq \Pi_{w,1}$ of all permutations $\pi$ such  that the number $l$ is the smallest $w$-threshold of $\pi$ and the number $i$ is the smallest $(\pi,l,w)$-element. In particular,  we can write
$$
|\Pi_{w,1}|=\sum\limits_{l=1}^k\sum_{i=x_{l}+1}^{x_{l+1}}|S(l, i)|.
$$
Further, for every  $\pi\in S(l,i)$, we divide the set $\{\pi(1), \ldots, \pi(i)\}$ into two subsets: the subset  of those elements which are not greater than $k$ and the subset  of those elements which are greater than $k$. Conversely, given  arbitrarily the sets $Y,Y'$ satisfying
$$
Y\subseteq \{1,\ldots, k\},\;\;\;\;Y'\subseteq \{k+1, \ldots, n\},\;\;\;|Y|+|Y'|=i,
$$
we consider the subset $S(Y,Y', l,i)\subseteq S(l,i)$ of those permutations $\pi$ for which
$$
\{1,\ldots, k\}\cap \{\pi(1), \ldots, \pi(i)\}=Y,\;\;\;\{k+1, \ldots, n\}\cap \{\pi(1), \ldots, \pi(i)\}=Y'.
$$
Then, we have:
$$
|S(l, i)|=\sum\limits_{j=1}^k\sum_{(Y, Y')\in M_j}|S(Y, Y', l, i)|,
$$
where $M_j$ ($j\in\{1,\ldots, k\}$) is the set of  those pairs $(Y,Y')$ for which  $|Y|=j$. In Proposition~\ref{pomol1}, we characterize the elements of the set $S(Y,Y', l,i)$, which allows to find the following formula for its  cardinality:
$$
|S(Y,Y', l,i)|=\min\{|Y|, l\}\cdot (i-l-1)!\cdot (n-i)!\cdot \prod_{j=1}^{l} (x_j-j+1).
$$
We use the above formula to find the cardinality of the set $S(l,i)$ and, consequently, the following formula for $|\Pi_{w,1}|$:
$$
|\Pi_{w,1}|=
n!\cdot\sum\limits_{l=1}^k\left((r_{l-1}(x_l)-r_{l-1}(x_{l+1}))\cdot \prod\limits_{j=1}^l(x_j-j+1)\right),
$$
where the map $r_l\colon \{l+1,\ldots,n\}\to\mathbb{R}$ is defined for every integer $l\leq k$ as follows:
\begin{equation}\label{rere}
r_l(x):=\left\{
\begin{array}{ll}
0,&if\;\;l<0,\\
\\
\frac{1}{x}-\frac{k}{n}-\frac{{n-x-1\choose k}}{x {n\choose k}}-\frac{1}{{n\choose k}}\sum\limits_{j=1}^x\frac{{n-j-1\choose k-1}}{j},&if\;\;l=0,\\
\\
\frac{(x-l-1)!}{x!}\left(1-\frac{1}{l{n\choose x}}\sum\limits_{j=0}^l(l-j){k\choose j}{n-k\choose x-j}\right),&if\;\;1\leq l\leq k.
\end{array}
\right.
\end{equation}
By using a similar idea as in Proposition~\ref{pomol1}, we also  characterize the elements of the set $\Pi_{w,2}$ (see Proposition~\ref{pomol3}), which gives the following  formula for $|\Pi_{w,2}|$:
$$
|\Pi_{w,2}|=k(n-k-1)!\cdot\prod\limits_{j=1}^k(x_j-j+1).
$$

In Section~\ref{s4}, we derive the formula for the elements of an optimal sequence. To this aim, we introduce the notion of an {\it optimal point} (see Definition~\ref{jjdef}) of an arbitrary map $f\colon Z\to \mathbb{R}$, where  $Z\subseteq X^{(l)}$ or $Z\subseteq X_l$ for some $l\in\{1,\ldots, k\}$. Next, we define for every $l\in\{0,1\ldots, k\}$ a map $T_l\colon X^{(k-l)}\to\mathbb{R}$ (see formula (\ref{uogt})), which constitutes a natural generalization of the map
$$
T\colon X^{(k)}\to\mathbb{R},\;\;\;T(w)=\frac{|\Pi_{w,1}|}{n!}+\frac{|\Pi_{w,2}|}{n!}.
$$
We study  the maps $T_l$ in relation to the  maps $c_l$, $D_l$ ($l\in\{0,\ldots, k\}$) and   $F_{l, w}$ ($l\in\{1,\ldots, k\}$, $w\in X^{(k-l)}$) defined as follows:
\begin{eqnarray}
c_l(x)&:=&d_l(x-l),\;\;\;\;x\in \{l+1,\ldots,n\},\label{cccd}\\
D_l(w)&:=&r_{l-1}(x_1)-T_l(w),\;\;\;\;w\in X^{(k-l)},\label{dddd}\\
F_{l,w}(x)&:=&-c_{l-1}(x)-x\cdot D_{l}(w),\;\;\;\;x\in \{l,\ldots,n-1\}\label{fffd},
\end{eqnarray}
where $x_1$ in (\ref{dddd}) denotes the first letter of a word $w\in X^{(k-l)}$ or $x_1:=n-1$ depending on whether $l<k$ or $l=k$. In particular, we obtain
$$
D_0=-T_0=-T.
$$
In Proposition~\ref{proo8}, we show how to describe the maps $D_l$  in terms of the maps $d_l$. As a result, we obtain for every $l\in\{1,\ldots, k\}$ that the right side of (\ref{ineqqq}) is equal to
$$
\frac{D_l\left(w^{(l)}\right)}{l},
$$
where  $w^{(l)}\in X^{(k-l)}$ arises from the sequence $w:=(t_1, t_2+1, \ldots, t_k+k-1)$ by deleting the first $l$ letters.
In Proposition~\ref{prop10}, for any  $l\in\{1,\ldots, k\}$ and $w\in X^{(k-l)}$, we show that if $t$ is the smallest element $x\in\{1,\ldots, n-l\}$ such that $d_l(x)\leq D_l(w)/l$, then the number $t+l-1$ is an optimal point of  the map $F_{l,w}$. Next, we show (Proposition~\ref{p8}) that an arbitrary sequence $w\in X^{(k)}$ is an optimal point of the map $T$ if and only if for every $l\in\{1,\ldots, k\}$ the $l$-th letter of $w$  is an optimal point of the map $F_{l, w^{(l)}}$. Finally, in Proposition~\ref{trud}, we show that if $w\in X^{(k)}$ is an optimal point of $T$, then $w$ must be  a non-decreasing sequence.  As a simple consequence of Theorem~\ref{tt1} and the above propositions, we obtain our main results (see Section~\ref{secpr}). The proofs  of Propositions~\ref{proo8}-\ref{trud}  are based on various combinatorial identities and on some auxiliary properties of the maps $r_l, c_l, D_l$ and $F_{l,w}$. We derive them in Section~\ref{sqa1}.

Further, we use for all $i,j\in\mathbb{Z}$ the following notations:
\begin{itemize}
\item $[i]:=\{t\in\mathbb{Z}\colon 1\leq t\leq i\}$,
\item $[i]_0:=[i]\cup\{0\}$,
\item $[i,j]:=\{t\in\mathbb{Z}\colon i<t\leq j\}$,
\item $[\leq i]:=\{t\in\mathbb{Z}\colon t\leq i\}$.
\end{itemize}

\section{The formula for the probability of success}\label{s3}

Let $Sym(n)$ be the set of all permutations of the set $[n]$. For every $\pi\in Sym (n)$ and every  $i\in[n]$ we call the image $\pi(i)$  the {\it $\pi$-rank} of the element $i$. We call the element $i$ a {\it $\pi$-candidate} if $\pi(i)\in[k]$. In particular, if  the ordering of the girls is defined by a permutation $\pi\in Sym(n)$, then the bachelor's win is to marry to the $i_0$-th girl, where $i_0\in[n]$ is an arbitrary $\pi$-candidate.

For every $\pi\in Sym(n)$ and every    $i\in[n]$ we   also consider the {\it relative $\pi$-rank} of the element $i$, i.e. the number of the elements from the set $[i]$ such that their $\pi$-ranks are not greater than $\pi(i)$; we denote this number by $\rho_\pi(i)$. In other words,  $\rho_{\pi}(i)$  is  the number of those elements from the set $\pi([i])$  which are not greater than $\pi(i)$.

\begin{definition}\label{deee1}
Let $\pi\in Sym(n)$,   $l\in[k]$ and let $w=(s_1, \ldots, s_{k})\in X^{(k)}$ be a non-decreasing sequence. We call an arbitrary element $i\in[s_{l}, s_{l+1}]$ satisfying the inequality $\rho_\pi(i)\leq l$ a $(\pi,l,w)$-{\it element}.  If the set  $[s_{l}, s_{l+1}]$ contains at least one  $(\pi,l,w)$-element, then we call the number $l$   a {\it $w$-threshold} of the permutation  $\pi$. In other words, the element $l\in[k]$ is a $w$-threshold of $\pi$ if there is $i\in[s_l, s_{l+1}]$ such that $\rho_\pi(i)\leq l$ (as before, we assume $s_{k+1}:=n-1$).
\end{definition}

Let now assume that the bachelor  uses the policy from Proposition~\ref{p1} with a sequence $w=(s_1, \ldots, s_k)$ and that the ordering of the girls is defined by a permutation $\pi\in Sym(n)$. Then $\pi$ is a lucky permutation corresponding to  $w$ if and only if one of the following conditions holds:
\begin{itemize}
\item $\pi$ has a $w$-threshold and if $l_0\in[k]$ is the smallest  $w$-threshold of $\pi$, then the smallest $(\pi,l_0,w)$-element is a $\pi$-candidate,
\item $\pi$ has no $w$-thresholds and the element $n$ is a $\pi$-candidate.
\end{itemize}

\begin{theorem}\label{tt1}
Let  $w_0=(x_1, \ldots, x_k)\in X^{(k)}$ be a non-decreasing sequence. Then the number of lucky permutations corresponding to $w_0$ and having a  $w_0$-threshold is equal to
$$
n!\cdot\sum\limits_{l=1}^k\left((r_{l-1}(x_l)-r_{l-1}(x_{l+1}))\cdot \prod\limits_{j=1}^l(x_j-j+1)\right),
$$
where the maps $r_l\colon [l,n]\to\mathbb{R}$ ($l\in[\leq k]$) are defined as in (\ref{rere}). The number of lucky permutations corresponding  to $w_0$ and having no $w_0$-thresholds is equal to
$$
k(n-k-1)!\cdot\prod\limits_{j=1}^k(x_j-j+1).
$$
\end{theorem}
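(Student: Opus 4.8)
The plan is to compute $|\Pi_{w_0,1}|$ and $|\Pi_{w_0,2}|$ by direct enumeration, the common tool being the classical encoding of a permutation $\sigma$ of an $m$-element totally ordered set by its sequence of relative ranks $(\rho_\sigma(1),\dots,\rho_\sigma(m))$: this is a bijection onto $[1]\times[2]\times\dots\times[m]$, so for prescribed sets $A_{i'}\subseteq[i']$ the number of $\sigma$ with $\rho_\sigma(i')\in A_{i'}$ for every $i'$ equals $\prod_{i'=1}^m|A_{i'}|$. Since relative ranks depend only on the relative order of the first $i'$ entries, this stays valid after conditioning on which values occupy a prefix (or a suffix) of $[n]$.

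For $|\Pi_{w_0,1}|$ I would partition $\Pi_{w_0,1}$ into the sets $S(l,i)$, indexed by $l\in[k]$ and $i\in[x_l,x_{l+1}]$, consisting of the permutations whose smallest $w_0$-threshold is $l$, whose smallest $(\pi,l,w_0)$-element is $i$, and with $\pi(i)\in[k]$; thus $|\Pi_{w_0,1}|=\sum_{l=1}^k\sum_{i=x_l+1}^{x_{l+1}}|S(l,i)|$, a genuine partition since the smallest threshold and the smallest corresponding element are well defined. Each $S(l,i)$ is refined further by recording the pair $(Y,Y')=(\pi([i])\cap[k],\,\pi([i])\setminus[k])$, and the resulting sets $S(Y,Y',l,i)$ are characterized in Proposition~\ref{pomol1}: $\pi$ belongs to $S(Y,Y',l,i)$ exactly when $\pi([i])=Y\cup Y'$, the value $\pi(i)$ is one of the $\min\{|Y|,l\}$ smallest elements of $Y$ (which encodes $\rho_\pi(i)\le l$ together with $\pi(i)\in[k]$), and the earlier relative ranks are large in precisely the ranges forced by minimality, i.e.\ $\rho_\pi(i')>l'$ for $i'\in[x_{l'},x_{l'+1}]$ with $l'<l$, and $\rho_\pi(i')>l$ for $i'\in[x_l,i-1]$. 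Counting then gives $\min\{|Y|,l\}$ choices for $\pi(i)$, next $(n-i)!$ orderings of the remaining large values on positions $i+1,\dots,n$, and, by the encoding applied to positions $1,\dots,i-1$, a product $\bigl(\prod_{i'=1}^{x_1}i'\bigr)\bigl(\prod_{l'<l}\prod_{i'=x_{l'}+1}^{x_{l'+1}}(i'-l')\bigr)\bigl(\prod_{i'=x_l+1}^{i-1}(i'-l)\bigr)$ which telescopes to $(i-l-1)!\prod_{j=1}^l(x_j-j+1)$; this yields $|S(Y,Y',l,i)|=\min\{|Y|,l\}\,(i-l-1)!\,(n-i)!\prod_{j=1}^l(x_j-j+1)$. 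Summing over the $(Y,Y')$ with $|Y|=j$ multiplies by $\binom{k}{j}\binom{n-k}{i-j}$; writing $\min\{j,l\}$ as the number of $t\in[l]$ with $t\le j$ and using Vandermonde's identity $\sum_j\binom{k}{j}\binom{n-k}{i-j}=\binom{n}{i}$ gives
\[
|S(l,i)|=(i-l-1)!\,(n-i)!\,\Bigl(l\binom{n}{i}-\sum_{j=0}^{l-1}(l-j)\binom{k}{j}\binom{n-k}{i-j}\Bigr)\prod_{j=1}^l(x_j-j+1).
\]
It remains to sum over $i\in[x_l,x_{l+1}]$, and the crux is the combinatorial identity
\[
\frac{(i-l-1)!\,(n-i)!}{n!}\Bigl(l\binom{n}{i}-\sum_{j=0}^{l-1}(l-j)\binom{k}{j}\binom{n-k}{i-j}\Bigr)=r_{l-1}(i-1)-r_{l-1}(i),
\]
with $r_{l-1}$ as in (\ref{rere}) (using the branch $r_0$ when $l=1$ and the generic branch when $l\ge2$). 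Granting it, the inner sum telescopes to $r_{l-1}(x_l)-r_{l-1}(x_{l+1})$, and summing over $l$ gives the first asserted formula. I expect this identity --- a somewhat delicate reduction of the binomial sums by Pascal's rule, handled separately for the two branches of $r_{l-1}$ --- to be the main obstacle; everything before it is bookkeeping with the relative-rank encoding.

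For $|\Pi_{w_0,2}|$ the argument is shorter. A permutation $\pi$ lies in $\Pi_{w_0,2}$ iff it has no $w_0$-threshold and $\pi(n)\in[k]$, and having no threshold means $\rho_\pi(i')>l'$ whenever $i'\in[x_{l'},x_{l'+1}]$ with $l'\in[k]$ (recall $x_{k+1}=n-1$); these ranges partition $[x_1,n-1]$ and leave positions $1,\dots,x_1$ unconstrained. Fixing $\pi(n)$ in $k$ ways and applying the encoding to positions $1,\dots,n-1$ produces $k\bigl(\prod_{i'=1}^{x_1}i'\bigr)\prod_{l'=1}^k\prod_{i'=x_{l'}+1}^{x_{l'+1}}(i'-l')$ permutations, and this product telescopes --- now with no binomial identity needed --- to $k(n-k-1)!\prod_{j=1}^k(x_j-j+1)$, the second asserted formula. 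Along the way one verifies the characterization of $\Pi_{w_0,2}$ recorded in Proposition~\ref{pomol3} together with the conventions $s_{k+1}:=n-1$ and $x_{k+1}=n-1$.
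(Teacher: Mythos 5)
Your decomposition is the paper's: the partition of $\Pi_{w_0,1}$ into the sets $S(l,i)$, the refinement by the pair $(Y,Y')$, the count $\min\{|Y|,l\}\,(i-l-1)!\,(n-i)!\prod_{j=1}^l(x_j-j+1)$, the Vandermonde manipulation of $\sum_j\binom{k}{j}\binom{n-k}{i-j}\min\{j,l\}$, and the final telescoping over $i$ all coincide with Propositions~\ref{pomol1}--\ref{lolip2}. Where you genuinely diverge is in how $|S(Y,Y',l,i)|$ and $|\Pi_{w_0,2}|$ are counted: you keep the membership conditions in their native form (prescribed ranges for the relative ranks $\rho_\pi(i')$ at the positions $i'<i$, which is just the unfolded definition of ``smallest threshold, smallest element'') and invoke the bijection between orderings of an $(i-1)$-set and $[1]\times\cdots\times[i-1]$, so that $\prod_{i'}|A_{i'}|$ telescopes to $(i-l-1)!\prod_{j=1}^l(x_j-j+1)$. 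The paper instead first translates the rank conditions into placement conditions --- the $j$-th smallest value of $\pi([i-1])$ must lie in $\pi([x_j])$ for $j\le l$, i.e.\ condition (iii) of Proposition~\ref{pomol1} and condition (ii) of Proposition~\ref{pomol3} --- and counts by placing those values; establishing that translation costs the paper a nontrivial equivalence argument that your rank-encoding bypasses entirely, so on this point your route is more economical (and your treatment of $\Pi_{w_0,2}$ is likewise cleaner). The one thing you have not proved is what you correctly flag as the crux: your displayed identity is, after substituting the definition of $r_l$, precisely $l\,r_l(i)=r_{l-1}(i-1)-r_{l-1}(i)$, which is Lemma~\ref{propppp}(iii). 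It is true, but it is not a quick Pascal-rule reduction: the paper derives it from the auxiliary sums $a_l,b_l$ and the four identities of Lemma~\ref{prop1}, and the case $l=1$ (the branch $r_0$, with its harmonic-type sum) needs the separate verification of Lemma~\ref{propppp}(ii). Until that identity is checked, your argument reduces the first formula of the theorem to it rather than establishing it; the second formula, by contrast, is complete as you present it.
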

\begin{proof}

Let us fix the integers $l_0,i_0$ such that $l_0\in[k]$ and  $i_0\in[x_{l_0},x_{l_0+1}]$. At first, we   determine the number of lucky permutations $\pi\in \Pi_{w_0}$ such that $l_0$ is the smallest $w_0$-threshold of $\pi$ and $i_0$ is the smallest $(\pi,l_0,w_0)$-element. Let us denote by $S(l_0, i_0)$ the set of all such permutations.

For every permutation  $\pi\in Sym(n)$, we denote
$$
\mathcal{X}_\pi:=\pi([i_0])\cap [k],\;\;\;\mathcal{X}'_\pi:=\pi([i_0])\cap [k,n].
$$
Obviously, we have
$$
\pi([i_0])=\mathcal{X}_\pi\cup \mathcal{X}'_\pi,\;\;\;\mathcal{X}_\pi\subseteq [k],\;\;\;\mathcal{X}'_\pi\subseteq [k,n],\;\;\;|\mathcal{X}_\pi|+|\mathcal{X}'_\pi|=i_0.
$$
Moreover, if  $\pi\in S(l_0, i_0)$, then $\pi$  is a  lucky permutation, and hence $i_0$ is a $\pi$-candidate, which implies $\pi(i_0)\in[k]$ and  consequently $\pi(i_0)\in \mathcal{X}_\pi$. Further, since $i_0$ is a $(\pi, l_0, w_0)$-element, we obtain $\rho_\pi(i_0)\leq l_0$, which means that the set $\pi([i_0])$ contains at most $l_0$ elements which are not greater than $\pi(i_0)$. Since $\pi(i_0)\in \mathcal{X}_\pi\subseteq [k]$ and $\mathcal{X}'_\pi\subseteq [k,n]$, all these elements must belong to the set $\mathcal{X}_\pi$. In particular, if we denote $\mathcal{X}_\pi:=\{y_1, \ldots, y_{j_0}\}$ for some $j_0\in [k]$, where $y_1<y_2<\ldots<y_{j_0}$, then we obtain: $\pi(i_0)=y_\iota$ for some  $1\leq \iota\leq \min\{j_0, l_0\}$.  Note that $\iota$ is the relative $\pi$-rank of the element $i_0$.

Let  $Y, Y'\subseteq [n]$ be arbitrary subsets which satisfy the following conditions
\begin{equation}\label{1}
Y\subseteq[k],\;\;\;Y'\subseteq [k,n],\;\;\;|Y|+|Y'|=i_0.
\end{equation}
Let us denote
\begin{eqnarray*}
S(Y, Y', l_0, i_0)&:=&\{\pi\in S(l_0, i_0)\colon \mathcal{X}_\pi=Y,\;\mathcal{X}'_\pi=Y'\},\\
\mu_{j, l_0}&:=&\min\{j, l_0\},\;\;\;j\in[k].
\end{eqnarray*}

\begin{proposition}\label{pomol1}
For every permutation $\pi\in S(Y, Y', l_0, i_0)$ the following three conditions hold:
\begin{itemize}
\item [(i)] $\pi([i_0])=Y\cup Y'$,
\item [(ii)] if $Y=\{y_1,y_2,\ldots,y_{j_0}\}$ for some $j_0\in[k]$ and $y_1<y_2<\ldots< y_{j_0}$, then there is $\iota\in[\mu_{j_0, l_0}]$ such that $\pi(i_0)=y_{\iota}$,
\item [(iii)] if $\pi([i_0-1])=\{y_1',y_2',\ldots,y'_{i_0-1}\}$ and $y_1'<y_2'<\ldots<y'_{i_0-1}$, then  $y'_j\in \pi([x_j])$ for every $j\in [l_0]$.
\end{itemize}
Conversely, if the sets $Y$, $Y'$ satisfy (\ref{1}) and a permutation $\pi\in Sym(n)$ satisfies (i)-(iii), then $\pi\in S(Y, Y', l_0, i_0)$.
\end{proposition}
\begin{proof}[of Proposition~\ref{pomol1}]
Let $\pi\in S(Y, Y', l_0, i_0)$ be arbitrary. By the above reasoning, the conditions (i)-(ii) directly follow from the equalities $Y=\mathcal{X}_\pi$ and $Y'=\mathcal{X}'_\pi$. To justify (iii) let us assume contrary that there is   $j_1\in[l_0]$ such that $i_1:=\pi^{-1}(y'_{j_1})\notin [x_{j_1}]$. Since $\pi(i_1)=y'_{j_1}\in \pi([i_0-1])$, we have $i_1<i_0$ and  $\pi([i_1])\subseteq \pi([i_0-1])=\{y_1',y_2',\ldots,y'_{i_0-1}\}$. Thus the set $\pi([i_1])$ contains at most $j_1$ elements which are not greater than $y'_{j_1}=\pi(i_1)$. Hence the relative $\pi$-rank of the element $i_1$ is not greater than $j_1$, i.e. $\rho_{\pi}(i_1)\leq j_1$. Since the sequence $(x_{j_1}, \ldots, x_{l_0+1})$ is non--decreasing, we have
$$
[x_{l_0+1}]=[x_{j_1}]\cup\bigcup_{j_1\leq j\leq l_0}[x_j, x_{j+1}].
$$
Since $i_1\in [i_0]\setminus [x_{j_1}]\subseteq [x_{l_0+1}]\setminus [x_{j_1}]$, there is $j_1\leq j_2\leq l_0$ such that $i_1\in [x_{j_2}, x_{j_2+1}]$. But  $\rho_{\pi}(i_1)\leq j_1\leq j_2$, and hence  $i_1$ is a $(\pi, j_2, w_0)$-element. Consequently $j_2$ is a $w_0$-threshold of $\pi$. Since $j_2\leq l_0$ and  $l_0$ is the smallest $w_0$-threshold of $\pi$, we obtain: $j_2=l_0$. Consequently, the element $i_1$ is a $(\pi, l_0, w_0)$-element. Since $i_1<i_0$, we obtain the contradiction with the assumption that  $i_0$ is the smallest $(\pi, l_0, w_0)$-element. This justifies the first part of Proposition~\ref{pomol1}.

Conversely, let $\pi\in Sym(n)$ be an arbitrary permutation which  satisfies (i)-(iii). We show that $\pi\in S(Y, Y', l_0, i_0)$.  By (ii), we have $\pi(i_0)\in Y\subseteq [k]$, and hence $i_0$ is a $\pi$-candidate. The equalities $Y=\mathcal{X}_\pi$ and $Y'=\mathcal{X}'_{\pi}$ directly follows from the definition of the sets $\mathcal{X}_\pi$, $\mathcal{X}'_\pi$ as well as from the conditions~(\ref{1}) and from (i). Next, we have $\pi(i_0)=y_\iota$ for some $\iota\in [\mu_{j_0, l_0}]$. Since $\pi([i_0])=Y\cup Y'$ and every element in $Y'$ is greater than every element in $Y$, we see by (ii) that $\{y_1, \ldots, y_\iota\}$ is the set of all elements from $\pi([i_0])$ which are  not greater than $y_\iota=\pi(i_0)$. Thus the relative $\pi$-rank of $i_0$ is equal to $\iota$. But $\iota\leq l_0$ and hence $\rho_{\pi}(i_0)\leq l_0$. Since $i_0\in [x_{l_0}, x_{l_0+1}]$, we see that  $i_0$ is a $(\pi, l_0, w_0)$-element. Thus $l_0$ is a $w_0$-threshold of $\pi$. To show that $l_0$ is the smallest $w_0$-threshold of $\pi$, suppose contrary that there is $l_1<l_0$ such that $l_1$ is a $w_0$-threshold of  $\pi$. Then there is $i_1\in [x_{l_1}, x_{l_1+1}]$ such that
\begin{equation}\label{mmmmm}
\rho_{\pi}(i_1)\leq l_1.
\end{equation}
By (iii) we have $y'_j\in \pi([x_j])$ for every $j\in[l_1]$. But for every $j\in[l_1]$ we have $[x_j]\subseteq [x_{l_1}]\subseteq [i_1]$. Thus we have
\begin{equation}\label{mmm}
\{y_1', \ldots, y_{l_1}', \pi(i_1)\}\subseteq \pi([i_1]).
\end{equation}
Since $i_0\in[x_{l_0}, x_{l_0+1}]$, $i_1\in [x_{l_1}, x_{l_1+1}]$ and $l_1<l_0$, we have $i_1<i_0$ and hence, by (iii), we have
$\pi([i_1])\subseteq \{y_1', \ldots, y_{i_0-1}'\}$. Thus there is $j_1\in[i_0-1]$ such that
\begin{equation}\label{mmmm}
\pi(i_1)=y'_{j_1}.
\end{equation}
Assuming $j_1\in[l_1]$,  we would have by (iii): $\pi(i_1)=y'_{j_1}\in \pi([x_{j_1}])$, and hence $i_1\in [x_{j_1}]\subseteq [x_{l_1}]$. But, this is impossible as $i_1\in [x_{l_1}, x_{l_1+1}]$. Thus it must be $j_1>l_1$, and consequently, we see  by (\ref{mmm})--(\ref{mmmm}) that the set $\pi([i_1])$ contains at least $l_1+1$ elements which are not grater than $\pi(i_1)$. Thus $\rho_{\pi}(i_1)\geq l_1+1$ and we have a contradiction with~ (\ref{mmmmm}). Hence $l_0$ is indeed the smallest $w_0$-threshold of $\pi$. To show that $\pi\in S(Y, Y', l_0, i_0)$, we now need to show that  $i_0$ is  the smallest $(\pi, l_0, w_0)$-element. Suppose contrary that there is $i_1<i_0$ such that $i_1$ is a ($\pi, l_0, w_0)$-element. We have $i_1\in [x_{l_0}, x_{l_0+1}]$ and $\rho_{\pi}(i_1)\leq l_0$. Similarly as above, we obtain by (iii) the inclusion $\{y'_1, \ldots, y'_{l_0}, \pi(i_1)\}\subseteq \pi([i_1])$. Since $i_1<i_0$, there is $j_1\in[i_0-1]$ such that $\pi(i_1)=y'_{j_1}$. Similarly as above, we  show that $j_1>l_0$. This implies $\rho_{\pi}(i_1)\geq l_0+1$ and we have a contradiction. Consequently $i_0$ is indeed the smallest $(\pi, l_0, w_0)$-element, and hence $\pi\in S(Y, Y', l_0, i_0)$. \qed
\end{proof}

\begin{proposition}\label{lolip}
The number of elements in the set $S(Y, Y', l_0, i_0)$ is equal to
$$
\mu_{j_0, l_0}\cdot (i_0-l_0-1)!\cdot (n-i_0)!\cdot \prod_{j=1}^{l_0} (x_j-j+1),
$$
where $j_0:=|Y|$.
\end{proposition}
\begin{proof}[of Proposition~\ref{lolip}]
We use the characterization of the set $S(Y, Y', l_0, i_0)$ from  Proposition~\ref{pomol1}. By the conditions (i)--(iii), we see that every permutation $\pi\in S(Y, Y', l_0, i_0)$ can be constructed as follows. At first, we choose arbitrarily an element $\iota\in[\mu_{j_0, l_0}]$ and  define: $\pi(i_0):=y_\iota$. We can do that in $\mu_{j_0, l_0}$ ways. Next,  for every $j\in [l_0]$ we choose  an element $i_j\in [x_j]$ and define $\pi(i_j):=y'_j$. We can do that in
$\prod_{j=1}^{l_0} (x_j-j+1)$ ways. Further, we define  the $\pi$-ranks   from the set
$$
\widetilde{Y}:=(Y\cup Y')\setminus\{y_\iota, y'_1, \ldots, y'_{l_0}\}=\{y'_{l_0+1}, \ldots, y'_{i_0-1}\}.
$$
Namely, for every $y\in \widetilde{Y}$ we choose an element $i_y\in [i_0-1]\setminus \{i_1, \ldots, i_{l_0}\}$ and define $\pi(i_y):=y$. We can do that in $(i_0-l_0-1)!$ ways. Finally, we define the $\pi$-ranks from the set $[n]\setminus (Y\cup Y')$, i.e. for every $i\in [n]\setminus [i_0]$ we choose an element $y_i\in [n]\setminus (Y\cup Y')$ and define $\pi(i):=y_i$. We can do that in $(n-i_0)!$ ways. Hence, the claim directly  follows from the above construction.\qed
\end{proof}

\begin{proposition}\label{lolip2}
The number of elements in the set $S(l_0, i_0)$ is equal to
$$
n!\cdot l_0\cdot r_{l_0}(i_0)\cdot \prod_{j=1}^{l_0} (x_j-j+1).
$$
\end{proposition}
\begin{proof}[of Proposition~\ref{lolip2}]
We have:
$$
|S(l_0, i_0)|=\sum_{j\in[k]}\sum_{(Y, Y')\in M_j}|S(Y, Y', l_0, i_0)|,
$$
where for every $j\in[k]$ we define
$$
M_j:=\{(Y, Y')\colon Y\subseteq[k],\;\;Y'\subseteq [n]\setminus[k],\;\;|Y|=j,\;\;|Y'|=i_0-j\}.
$$
Since $|M_j|={k\choose j}{n-k\choose i_0-j}$, we obtain from Proposition~\ref{lolip}:
\begin{equation}\label{eq3}
|S(l_0, i_0)|=(i_0-l_0-1)!\cdot (n-i_0)!\cdot \lambda(i_0, l_0)\cdot \prod_{j=1}^{l_0} (x_j-j+1),
\end{equation}
where
\begin{eqnarray*}
\lambda(i_0, l_0)&=&\sum_{j=1}^k{k\choose j}{n-k\choose i_0-j}\mu_{j, l_0}=\\
&=&\sum_{j=1}^{l_0}{k\choose j}{n-k\choose i_0-j} j+\sum_{j=l_0+1}^k{k\choose j}{n-k\choose i_0-j}l_0=\\
&=&\sum_{j=0}^{l_0}{k\choose j}{n-k\choose i_0-j} j+\sum_{j=0}^k{k\choose j}{n-k\choose i_0-j}l_0-\sum_{j=0}^{l_0}{k\choose j}{n-k\choose i_0-j}l_0=\\
&=&\sum_{j=0}^k{k\choose j}{n-k\choose i_0-j}l_0-\sum_{j=0}^{l_0}(l_0-j){k\choose j}{n-k\choose i_0-j}.
\end{eqnarray*}
From the Vandermonde's identity, we obtain
$$
\lambda(i_0, l_0)=l_0{n\choose i_0}-\sum_{j=0}^{l_0}(l_0-j){k\choose j}{n-k\choose i_0-j}=\frac{l_0\cdot n!}{(n-i_0)!\cdot (i_0-l_0-1)!}\cdot r_{l_0}(i_0).
$$
The claim  now follows from~(\ref{eq3}).\qed
\end{proof}

Obviously, the number of all lucky permutations from $\Pi_{w_0}$ which have  a $w_0$-threshold is equal to
$\sum_{l=1}^k\sum_{i=x_{l}+1}^{x_{l+1}}|S(l, i)|$. We see by Proposition~\ref{lolip2} that  this double sum can be written as follows
$$
n!\cdot\sum_{l=1}^k\left(\left(\sum\limits_{i=x_{l}+1}^{x_{l+1}}lr_l(i)\right)\cdot \prod_{j=1}^{l} (x_j-j+1)\right).
$$
The crucial point for the further study, which also finishes the proof of the first part of Theorem~\ref{tt1},  is the observation that the sum $\sum_{i=x_{l}+1}^{x_{l+1}}lr_l(i)$ in the above expression can be written in a closed form as follows:
$$
\sum_{i=x_{l}+1}^{x_{l+1}}lr_l(i)=r_{l-1}(x_l)-r_{l-1}(x_{l+1}).
$$
The last equality follows from the identity $lr_l(x)=r_{l-1}(x)-r_{l-1}(x-1)$ for all $l\in[\leq k]$ and $x\in [l,n]$, which we derive in
Section~\ref{sqa1} (see Lemma~\ref{propppp}~(iii) therein).

To show the second part of Theorem~\ref{tt1}, we provide the following characterization of all lucky permutations from $\Pi_{w_0}$ which have no $w_0$-thresholds.

\begin{proposition}\label{pomol3}
Let  $\pi\in \Pi_{w_0}$ be an arbitrary  lucky permutation without $w_0$-thresholds. Then the following two conditions hold:
\begin{itemize}
\item[(i)] there is $\iota\in[k]$ such that $\pi(n)=\iota$,
\item[(ii)] if $[k+1]\setminus\{\iota\}=\{y_1, \ldots, y_k\}$ and $y_1<y_2<\ldots<y_{k}$, then $y_j\in \pi([x_j])$ for every $j\in[k]$.
\end{itemize}
Conversely, if $\pi\in Sym(n)$ is an arbitrary permutation which satisfies (i)--(ii), then $\pi$  is a lucky permutation corresponding to $w_0$ and $\pi$ has no $w_0$-thresholds.
\end{proposition}
\begin{proof}[of Proposition~\ref{pomol3}]
The condition (i) directly follows from the definition of a lucky permutation. To show (ii), we proceed in the similar way as in the proof of Proposition~\ref{pomol1}. Namely, suppose contrary that there is $j_0\in[k]$ such that $i_0:=\pi^{-1}(y_{j_0})\notin [x_{j_0}]$. By~(i), we have $i_0\neq n$. Thus $i_0\in [n-1]$, and since
$$
[n-1]:=[x_{j_0}]\cup \bigcup_{j_0\leq j\leq k}[x_j, x_{j+1}],
$$
we obtain that there is $j_0\leq j_1\leq k$ such that $i_0\in [x_{j_1}, x_{j_1+1}]$. Since  $y_{j_0}\in [k+1]\setminus\{\iota\}$ and $\iota\notin \pi([i_0])$, every element in  $\pi([i_0])$ which is not grater than $y_{j_0}$ belongs to $[k+1]\setminus\{\iota\}$. Since $\pi(i_0)=y_{j_0}$ and
$[k+1]\setminus \{\iota\}=\{y_1, \ldots, y_k\}$, we see that the set of all  elements from $\pi([i_0])$ which are not grater than $\pi(i_0)$ is contained in the set $\{y_1, \ldots, y_{j_0}\}$. Thus $\rho_{\pi}(i_0)\leq j_0\leq j_1$. Consequently the element $i_0$ is a $(\pi, j_1, w_0)$-element. Thus the set $[x_{j_1}, x_{j_1+1}]$ contains a $(\pi, j_1, w_0)$-element, which means that $j_1$ is a $w_0$-threshold of $\pi$, contrary to our assumption.

Conversely, let $\pi\in Sym(n)$ be an arbitrary permutation which satisfies (i)--(ii). We show that $\pi$  is a lucky permutation corresponding to $w_0$ and $\pi$ has no $w_0$-thresholds. By (i), it is enough to show that $\pi$ has no $w_0$-thresholds. We proceed in the similar way as in the proof of Proposition~\ref{pomol1}. Namely, suppose contrary that $\pi$ has a $w_0$-threshold. Then there are $j_0\in [k]$ and $i_0\in [x_{j_0}, x_{j_0+1}]$ such that $\rho_{\pi}(i_0)\leq j_0$. By (ii), we have $y_j\in \pi([x_j])$ for every $j\in [j_0]$. But   $[x_j]\subseteq [x_{j_0}]\subseteq [i_0]$ for every $j\in [j_0]$. Hence
$\{y_1, \ldots, y_{j_0}, \pi(i_0)\}\subseteq \pi([i_0])$. If $\pi(i_0)\notin [k+1]$, then $\pi(i_0)>y_{j_0}$ and consequently $\rho_{\pi}(i_0)\geq j_0+1$. Thus, it must be $\pi(i_0)\in [k+1]\setminus\{\iota\}$ (note that $i_0\neq n$ and hence $\pi(i_0)\neq \iota$). By (ii), there is $j_1\in [k]$ such that $\pi(i_0)=y_{j_1}$. Assuming $j_1\in[j_0]$,  we would have $\pi(i_0)=y_{j_1}\in \pi([x_{j_1}])$. Hence $i_0\in [x_{j_1}]\subseteq [x_{j_0}]$, which is impossible as $i_0\in [x_{j_0}, x_{j_0+1}]$. Thus it must be $j_1>j_0$ and consequently  $\rho_{\pi}(i_0)\geq j_0+1$ contrary to our assumption. This finishes the proof of Proposition~\ref{pomol3}. \qed
\end{proof}

By using the conditions (i)-(ii) from Proposition~\ref{pomol3}, we see that every lucky permutation $\pi\in \Pi_{w_0}$ without $w_0$-thresholds can be constructed as follows. At first, we choose arbitrarily an element $\iota\in[k]$ and we define: $\pi(n):=\iota$. We can do that in $k$ ways. Next, we choose for every $j\in [k]$ an element $i_j\in [x_j]$ and define $\pi(i_j):= y_j$. This can be done in $\prod_{j=1}^{k} (x_j-j+1)$ ways. Finally, we define  the $\pi$-ranks from the set $[n]\setminus\{\iota, y_1, \ldots, y_{k}\}$, which can be done in $(n-k-1)!$ ways.  As a result of this construction, we obtain the required formula. This completes the proof of Theorem~\ref{tt1}.
 \qed
\end{proof}

\section{The  formula for an optimal sequence}\label{s4}

Let $T\colon X^{(k)}\to\mathbb{R}$ be the map defined for every $w=(x_1, \ldots, x_k)\in X^{(k)}$ as follows: $T(w):=P_1(w)+P_2(w)$, where
\begin{eqnarray*}
P_1(w)&:=&\frac{|\Pi_{w,1}|}{n!}=\sum\limits_{l=1}^k\left((r_{l-1}(x_l)-r_{l-1}(x_{l+1}))\cdot \prod\limits_{j=1}^l(x_j-j+1)\right),\\
P_2(w)&:=&\frac{|\Pi_{w,2}|}{n!}=\xi_{k,n}\cdot\prod\limits_{j=1}^k(x_j-j+1),
\end{eqnarray*}
and $\xi_{k,n}:=k(n-k-1)!/n!$.  We see by Theorem~\ref{tt1} that the probability of success for the policy described in  Proposition~\ref{p1} is equal to $T(w_0)$, where $w_0=(s_1, \ldots, s_k)$.

\begin{definition}\label{jjdef}
If  $f\colon Z\to\mathbb{R}$ is a map with $Z\subseteq X^{(l)}$ or $Z\subseteq X_l$ ($l\in[k]$), then we call an element $w_0\in Z$ such that $f(w_0)\geq f(w)$ for every $w\in Z$ an {\it optimal point} of this map.
\end{definition}

In particular, we see that if $w_0\in X^{(k)}$ is an optimal point of the map $T$, then $w_0$ is  an  optimal sequence (see Definition~\ref{ddddef}) if and only if it is a non--decreasing sequence. In this section, we show (see Proposition~\ref{trud}) that every optimal point of the map $T$ is indeed a non--decreasing sequence, which implies that every optimal point of $T$ is simultaneously an optimal sequence. We also derive the formula for an optimal point of $T$ (see Propositions~\ref{prop10},\ref{p8}). For this aim, we introduce  certain natural generalizations of this map. Namely, for every $l\in [k]_0$ we define the map  $T_l\colon X^{(k-l)}\to\mathbb{R}$  as follows:
\begin{equation}\label{uogt}
T_l(w):=\sum\limits_{j=1}^{k-l}R_{l,j}(w)\cdot\Gamma_{l,1,j}(w)+\xi_{k,n}\cdot\Gamma_{l,1,k-l}(w),
\end{equation}
where the maps $R_{l,j},\;\Gamma_{l,j,j'}\colon X^{(k-l)}\to\mathbb{R}$ ($l\in[k]_0$, $j,j'\in[n]_0$) are defined as follows (in the  formula for $R_{l,j}$ below we assume $x_{k-l+1}:=n-1$):
\begin{eqnarray*}
R_{l,j}((x_1, \ldots, x_{k-l}))&:=&\left\{
\begin{array}{ll}
r_{l+j-1}(x_j)-r_{l+j-1}(x_{j+1}),&{\rm if}\;\;1\leq j\leq k-l,\\
0,&{\rm otherwise},
\end{array}
\right.\\
\Gamma_{l,j,j'}((x_1, \ldots, x_{k-l}))&:=&\left\{
\begin{array}{ll}
\prod\limits_{t=j}^{j'}(x_t-l-t+1),&{\rm if}\;\;1\leq j\leq j'\leq k-l,\\
1,&{\rm otherwise}.
\end{array}
\right.
\end{eqnarray*}
In particular $T_0=T$ and $T_k=\xi_{k,n}$.

Let us consider the maps  $D_l$ ($l\in[k]_0$) defined by (\ref{dddd}). In Section~\ref{sqa1}, we derive some properties of these maps  (see Lemma~\ref{prop8} therein), which allows to describe them  in terms of the maps $d_l$ defined by (\ref{aj}) in the following way.

\begin{proposition}\label{proo8}
For every $w=(x_1, \ldots x_k)\in X^{(k)}$ and every $l\in[k]_0$ we have
$$
D_l(\sigma^l(w))=\sum_{i=l}^{k-1}\left(\prod_{j=l+1}^i \widetilde{x}_j\right)d_i(\widetilde{x}_{i+1})+\left(\prod_{j=l+1}^k \widetilde{x}_j\right)\delta_{k,n},
$$
where  $\sigma\colon X^{(k-l)}\to X^{(k-l-1)}$ is a left-shift operator removing the first letter from a non-empty word and $\widetilde{x}_i:=x_i-i+1$ for every $i\in [k]$.
\end{proposition}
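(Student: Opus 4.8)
The plan is to collapse the claimed closed form into a single downward recursion in $l$, run from $l=k$ to $l=0$, for the numbers $D_l(\sigma^l(w))$, and then to check that the right‑hand side of the asserted identity satisfies the same recursion and the same terminal value.

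\emph{Step 1 (a peeling identity for $T_l$).} Fix $l\in[k-1]_0$ and a word $v=(v_1,\ldots,v_{k-l})\in X^{(k-l)}$. Splitting off the $j=1$ summand in the definition (\ref{uogt}) of $T_l$ and shifting the summation index, one reads off directly from the formulas for $R_{l,j}$ and $\Gamma_{l,j,j'}$ the two elementary identities
$$
R_{l,j}(v)=R_{l+1,j-1}(\sigma(v)),\qquad \Gamma_{l,1,j}(v)=(v_1-l)\,\Gamma_{l+1,1,j-1}(\sigma(v))\qquad(2\le j\le k-l),
$$
where the convention $v_{k-l+1}:=n-1$ inside $R_{l,j}$ matches $(\sigma(v))_{k-l}:=n-1$ inside $R_{l+1,j-1}$ for the boundary term. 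Together with $\Gamma_{l,1,1}(v)=v_1-l$ these give, for every $l\in[k-1]_0$,
$$
T_l(v)=(v_1-l)\bigl(R_{l,1}(v)+T_{l+1}(\sigma(v))\bigr),
$$
with terminal value $T_k(\epsilon)=\xi_{k,n}$ (for $l=k-1$ the word $v$ is a single letter and this is immediate from (\ref{uogt})).

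\emph{Step 2 (transfer to $D_l$ via Lemma~\ref{prop8}).} Let $v_1,v_2$ denote the first two letters of $v$, with $v_2:=n-1$ when $k-l=1$. Then $R_{l,1}(v)=r_l(v_1)-r_l(v_2)$ and, by (\ref{dddd}), $D_{l+1}(\sigma(v))=r_l(v_2)-T_{l+1}(\sigma(v))$ — here the convention ``first letter $:=n-1$ when $l+1=k$'' again coincides with $v_2:=n-1$. Eliminating $T_{l+1}(\sigma(v))$ from the peeling identity yields $T_l(v)=(v_1-l)\bigl(r_l(v_1)-D_{l+1}(\sigma(v))\bigr)$, hence
$$
D_l(v)=r_{l-1}(v_1)-(v_1-l)\,r_l(v_1)+(v_1-l)\,D_{l+1}(\sigma(v)).
$$
At this point I invoke the auxiliary identity $r_{l-1}(x)-(x-l)\,r_l(x)=c_l(x)$, valid for $l\in[\le k]$ and $x\in[l,n]$ (with $r_{-1}\equiv0$), which is among the properties of $r_l,c_l$ collected in Lemma~\ref{prop8} of Section~\ref{sqa1}; combined with $c_l(x)=d_l(x-l)$ from (\ref{cccd}) it reduces the display to $D_l(v)=d_l(v_1-l)+(v_1-l)\,D_{l+1}(\sigma(v))$. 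Specializing $v=\sigma^l(w)$ for $w=(x_1,\ldots,x_k)\in X^{(k)}$, so that $v_1=x_{l+1}$, $v_1-l=x_{l+1}-l=\widetilde{x}_{l+1}$ and $\sigma(v)=\sigma^{l+1}(w)$, we obtain
$$
D_l\bigl(\sigma^l(w)\bigr)=d_l(\widetilde{x}_{l+1})+\widetilde{x}_{l+1}\,D_{l+1}\bigl(\sigma^{l+1}(w)\bigr),\qquad l\in[k-1]_0.
$$

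\emph{Step 3 (terminal value and induction).} For $l=k$ we have $\sigma^k(w)=\epsilon$ and $D_k(\epsilon)=r_{k-1}(n-1)-\xi_{k,n}$; a short evaluation from (\ref{rere}) shows this equals $\delta_{k,n}$ as given by (\ref{ajj}). Indeed, for $k>1$ the sum defining $r_{k-1}(n-1)$ vanishes since $\binom{n-k}{n-1-j}\neq0$ forces $j=k-1$, whence the factor $k-1-j$ is $0$; thus $r_{k-1}(n-1)=(n-k-1)!/(n-1)!$ and $r_{k-1}(n-1)-\xi_{k,n}=(n-k-1)!\bigl(\tfrac1{(n-1)!}-\tfrac k{n!}\bigr)=(n-k)!/n!$. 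For $k=1$ the $l=0$ branch of (\ref{rere}) at $x=n-1$ gives $r_0(n-1)-\xi_{1,n}=-\tfrac1n\sum_{j=1}^{n-1}\tfrac1j$. On the other side, writing $E_l$ for the right‑hand side of the claimed formula and peeling off its $i=l$ term (all remaining terms carry a factor $\widetilde{x}_{l+1}$), one gets $E_l=d_l(\widetilde{x}_{l+1})+\widetilde{x}_{l+1}E_{l+1}$ with $E_k=\delta_{k,n}$. Since $D_l(\sigma^l(w))$ and $E_l$ obey the same recursion and the same terminal value, downward induction on $l$ from $k$ to $0$ yields $D_l(\sigma^l(w))=E_l$ for every $l\in[k]_0$, which is the assertion.

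I expect the genuinely delicate part to be the index bookkeeping in Step 1, namely keeping the two ``$n-1$'' boundary conventions (the one built into $R_{l,j}$ and the one built into $D_l$) in step with the shift $\sigma$; the combinatorial content proper sits in the identity $r_{l-1}(x)-(x-l)\,r_l(x)=c_l(x)$, which I treat here as already established in Lemma~\ref{prop8}.
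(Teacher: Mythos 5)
Your proof is correct and follows essentially the same route as the paper: the paper also reduces the claim to the one-step recursion $D_l(\sigma^l(w))=d_l(\widetilde{x}_{l+1})+\widetilde{x}_{l+1}D_{l+1}(\sigma^{l+1}(w))$ with terminal value $D_k(\epsilon)=\delta_{k,n}$ and closes by downward induction, the only difference being that it cites these two facts as Lemma~\ref{prop8}(i)--(ii) whereas you re-derive them inline (your Steps 1--3 essentially reproduce the paper's proof of that lemma). One small citation slip: the identity $r_{l-1}(x)-(x-l)r_l(x)=c_l(x)$ that you invoke is Lemma~\ref{prop7}(ii), not Lemma~\ref{prop8}.
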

\begin{proof}
The case $l=k$ directly follows from the equality $D_k(\epsilon)=\delta_{k,n}$ (see Lemma~\ref{prop8}~(i)). In the case $l\in[k-1]_0$, we have by  Lemma~\ref{prop8}~(ii)
$$
D_l(\sigma^l(w))=c_l(x_{l+1})+(x_{l+1}-l)D_{l+1}(\sigma^{l+1}(w)),
$$
where the map $c_l$ is defined by (\ref{cccd}). Hence, since $c_l(x_{l+1})=d_l(\widetilde{x}_{l+1})$, we can write
$$
D_l(\sigma^l(w))=d_l(\widetilde{x}_{l+1})+\widetilde{x}_{l+1}D_{l+1}(\sigma^{l+1}(w)).
$$
By easy induction on $m$, we can extend the last formula as follows:
\begin{equation}\label{erex}
D_l(\sigma^l(w))=\sum_{i=l}^m\left(\prod_{j=l+1}^i\widetilde{x}_j\right)d_i(\widetilde{x}_{i+1})+\left(\prod_{j=l+1}^{m+1}\widetilde{x}_j\right)D_{m+1}(\sigma^{m+1}(w))
\end{equation}
for every $l\in[k-1]_0$ and  $m\in[l-1, k-1]$. The claim now follows by taking $m:=k-1$ in~(\ref{erex}).\qed
\end{proof}

Let  $F_{l, w}$ ($l\in[k]$, $w\in X^{(k-l)}$) be the maps defined by (\ref{fffd}). In the proof of the next proposition, we use some properties of the maps $c_l$, which we  derive in Section~\ref{sqa1}.

\begin{proposition}\label{prop10}
Let $l\in[k]$ and $w\in X^{(k-l)}$. If $s$ is the smallest number $x\in[l-1, n-1]$ such that $c_l(x+1)\leq D_l(w)/l$, then $s$ is an optimal point of the map $F_{l,w}$. Consequently, if $t$ is the smallest number $x\in[n-l]$ such that $d_l(x)\leq D_l(w)/l$, then $t+l-1$ is an optimal point of  $F_{l,w}$.
\end{proposition}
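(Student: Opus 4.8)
The plan is to show that $F_{l,w}$ is unimodal on its domain $[l-1,n-1]=\{l,\ldots,n-1\}$ — strictly increasing, then non-increasing — and to identify the turning point with $s$. First I would look at the forward difference $\Delta(x):=F_{l,w}(x+1)-F_{l,w}(x)$ for $x\in\{l,\ldots,n-2\}$. By the definition (\ref{fffd}),
\[
\Delta(x)=\bigl(c_{l-1}(x)-c_{l-1}(x+1)\bigr)-D_l(w).
\]
Here I would use the difference identity $c_{l-1}(x)-c_{l-1}(x+1)=l\,c_l(x+1)$ — the $c$-analogue of the identity $l\,r_l(x)=r_{l-1}(x)-r_{l-1}(x-1)$ of Lemma~\ref{propppp}~(iii), which is among the properties of the maps $c_l$ established in Section~\ref{sqa1} (through (\ref{cccd}) it reduces to Pascal's rule applied to (\ref{aj})) — to obtain
\[
\Delta(x)=l\,c_l(x+1)-D_l(w)=l\left(c_l(x+1)-\frac{D_l(w)}{l}\right).
\]

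The second ingredient I would invoke from Section~\ref{sqa1} is that $c_l$ is strictly decreasing on $\{l+1,\ldots,n\}$ (by (\ref{cccd}) this is just the monotonicity of $x\mapsto d_l(x)$, visible factor by factor in (\ref{aj})). Consequently $x\mapsto\Delta(x)$ is strictly decreasing, so $\{x:\Delta(x)>0\}$ is an initial segment of $\{l,\ldots,n-2\}$ and its complement a final segment; equivalently, $F_{l,w}$ strictly increases on $\{l,\ldots,s\}$ and is non-increasing on $\{s,\ldots,n-1\}$, where $s$ is the smallest $x\in\{l,\ldots,n-1\}$ with $c_l(x+1)\le D_l(w)/l$. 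Indeed, for $x<s$ the minimality of $s$ gives $c_l(x+1)>D_l(w)/l$, hence $\Delta(x)>0$; while for $s\le x\le n-2$ the monotonicity of $c_l$ gives $c_l(x+1)\le c_l(s+1)\le D_l(w)/l$, hence $\Delta(x)\le 0$. (The endpoint $x=n-1$ needs a word of care: $\Delta(n-1)$ is not defined since $n$ lies outside the domain of $F_{l,w}$, but the inequality $c_l(n)\le D_l(w)/l$ is still meaningful, and when it fails for every admissible $x$ the map $F_{l,w}$ is strictly increasing throughout, so the maximizer is $n-1$, again in agreement with the definition of $s$.) It follows that $F_{l,w}(s)\ge F_{l,w}(x)$ for every $x$ in the domain, that is, $s$ is an optimal point of $F_{l,w}$.

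For the final clause I would simply reindex: by (\ref{cccd}) we have $c_l(x+1)=d_l(x+1-l)$, so as $x$ runs through $\{l,\ldots,n-1\}$ the integer $y:=x+1-l$ runs through $[n-l]$, and $c_l(x+1)\le D_l(w)/l$ is equivalent to $d_l(y)\le D_l(w)/l$. Hence the smallest admissible $x$ equals $t+l-1$, where $t$ is the smallest $y\in[n-l]$ with $d_l(y)\le D_l(w)/l$, and this number $t+l-1$ is exactly the optimal point just obtained.

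I expect the genuinely hard work to sit not in this argument but in the two deferred facts about the maps $c_l$ that it rests on — the difference identity and the strict monotonicity — which form the computational core and are postponed to Section~\ref{sqa1}. Within the present proof the only delicate point is the right endpoint $x=n-1$, where $\Delta$ is undefined while the defining inequality for $s$ still applies, so that one must check that ``the smallest $x$'' still singles out the maximizer even in the degenerate case that $F_{l,w}$ never turns around.
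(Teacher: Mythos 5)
Your argument is correct and is essentially the paper's own proof: the same forward-difference identity $F_{l,w}(x+1)-F_{l,w}(x)=l\,c_l(x+1)-D_l(w)$ (equation (\ref{uss}), obtained from Lemma~\ref{prop7}~(iii)), the same use of the monotonicity of $c_l$, and the same reindexing $d_l(x)=c_l(x+l)$ for the second clause. One small caveat: you assert that $c_l$ is \emph{strictly} decreasing, which is both more than Lemma~\ref{prop7}~(iv) provides and false in general (the increment $(l+1)c_{l+1}(x)$ can vanish for large $x$), but this does not matter since the inequalities you actually deploy --- $\Delta(x)>0$ for $x<s$ from the minimality of $s$, and $\Delta(x)\leq 0$ for $x\geq s$ from $c_l$ being non-increasing --- require only the non-strict monotonicity.
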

\begin{proof}
For every $x\in [l-1,n-2]$ the following equality holds:
\begin{equation}\label{uss}
F_{l,w}(x+1)-F_{l,w}(x)=l\cdot c_{l}(x+1)-D_l(w).
\end{equation}
Indeed, directly by the definition of the map $F_{l,w}$, the left side of (\ref{uss}) is equal to
$c_{l-1}(x)-c_{l-1}(x+1)-D_l(w)$, which, by the identity $lc_l(x+1)=c_{l-1}(x)-c_{l-1}(x+1)$ for all $x\in [l-1,n-1]$ (see Lemma~\ref{prop7}~(iii) in Section~\ref{sqa1}), is equal to the right side of (\ref{uss}).  Hence, the first part follows from~(\ref{uss}) and from  the fact that the map $c_l$ is non--increasing (see Lemma~\ref{prop7}~(iv)). The  second part follows now from the equalities $d_l(x)=c_l(x+l)$ for all $x\in [n-l]$.
\qed
\end{proof}

The below proposition is based on the observation that for every word $w=(x_1, \ldots, x_k)\in X^{(k)}$ and every $l\in[k]$ there are $A\in \mathbb{R}_+$, $B\in \mathbb{R}$ which do not depend on the letter $x_l$ and such that $T(w)=A\cdot F_{l, \sigma^l(w)}(x_l)+B$ (for the proof see Lemma~\ref{prop9} in Section~\ref{sqa1}).

\begin{proposition}\label{p8}
A sequence $w=(x_1, \ldots, x_k)\in X^{(k)}$ is an optimal point of the map $T$ if and only if for every $l\in[k]$ the letter $x_l$ is an optimal point of the map $F_{l, \sigma^l(w)}$.
\end{proposition}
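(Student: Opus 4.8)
The plan is to reduce everything to the affine factorization $T(w)=A\cdot F_{l,\sigma^l(w)}(x_l)+B$ supplied by Lemma~\ref{prop9}, whose essential feature is that $A\in\mathbb{R}_+$ and $B\in\mathbb{R}$ depend only on the letters of $w$ other than $x_l$. Since $\sigma^l(w)=(x_{l+1},\ldots,x_k)$ also does not involve $x_l$, this says: when all letters of $w$ except the $l$-th are frozen, $T(w)$ is an increasing affine function of $F_{l,\sigma^l(w)}(x_l)$. Hence maximizing $T$ over the $l$-th coordinate of $X^{(k)}$ is the same as maximizing $F_{l,\sigma^l(w)}$ over $X_l$. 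This gives the forward implication at once: if $w$ is an optimal point of $T$ and $x\in X_l$ is arbitrary, put $w':=(x_1,\ldots,x_{l-1},x,x_{l+1},\ldots,x_k)\in X^{(k)}$; then $\sigma^l(w')=\sigma^l(w)$, so applying Lemma~\ref{prop9} at index $l$ to both $w$ and $w'$ produces the \emph{same} constants $A,B$, and from $T(w)\geq T(w')$ together with $A>0$ we get $F_{l,\sigma^l(w)}(x_l)\geq F_{l,\sigma^l(w)}(x)$; thus $x_l$ is an optimal point of $F_{l,\sigma^l(w)}$.

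For the converse, assume each $x_l$ is an optimal point of $F_{l,\sigma^l(w)}$ and fix an arbitrary $v=(v_1,\ldots,v_k)\in X^{(k)}$. I would interpolate from $w$ to $v$ one letter at a time, in the order $l=1,2,\ldots,k$: set $w^{(0)}:=w$ and $w^{(m)}:=(v_1,\ldots,v_m,x_{m+1},\ldots,x_k)\in X^{(k)}$. Consecutive words $w^{(m-1)}$ and $w^{(m)}$ differ only in the $m$-th letter and share the common tail $\sigma^m(w^{(m-1)})=\sigma^m(w^{(m)})=(x_{m+1},\ldots,x_k)=\sigma^m(w)$. Applying Lemma~\ref{prop9} at index $m$ to $w^{(m-1)}$ and to $w^{(m)}$ yields $T(w^{(m-1)})=A_m F_{m,\sigma^m(w)}(x_m)+B_m$ and $T(w^{(m)})=A_m F_{m,\sigma^m(w)}(v_m)+B_m$ with the same $A_m,B_m$ (they depend on $w^{(m-1)}$ only through its letters other than the $m$-th, which are unchanged in $w^{(m)}$). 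By hypothesis $x_m$ is optimal for $F_{m,\sigma^m(w)}$, so $F_{m,\sigma^m(w)}(x_m)\geq F_{m,\sigma^m(w)}(v_m)$, and since $A_m>0$ we obtain $T(w^{(m-1)})\geq T(w^{(m)})$. Chaining these inequalities for $m=1,\ldots,k$ gives $T(w)=T(w^{(0)})\geq T(w^{(1)})\geq\cdots\geq T(w^{(k)})=T(v)$, so $w$ is an optimal point of $T$.

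The only delicate point — and essentially the whole content of the argument beyond Lemma~\ref{prop9} — is the bookkeeping that lets one set of constants $A_m,B_m$ serve both $w^{(m-1)}$ and $w^{(m)}$; this works precisely because only the $m$-th letter changes and $F_{m,\sigma^m(\cdot)}$ sees only the later letters $x_{m+1},\ldots,x_k$. It is also the reason the interpolation must run in the order $l=1,\ldots,k$ and not backwards: when the $m$-th letter is adjusted, the tail $(x_{m+1},\ldots,x_k)$ is still the original one, so the hypothesis ``$x_m$ is an optimal point of $F_{m,\sigma^m(w)}$'' applies without modification. Everything else is the elementary monotonicity of an affine map with positive slope.
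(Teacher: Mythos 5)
Your proof is correct and follows essentially the same route as the paper: both directions rest on the affine factorization of Lemma~\ref{prop9} with constants shared between words differing only in the $l$-th letter, and the converse uses the same one-letter-at-a-time interpolation chain. The only cosmetic differences are that you argue the forward implication directly rather than by contradiction, and in the converse you compare $w$ to an arbitrary $v$ instead of to a pre-chosen optimal point of $T$ (which slightly streamlines the conclusion).
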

\begin{proof}
Suppose, contrary, that the sequence $w=(x_1, \ldots, x_k)$ is an optimal point of $T$ and there is $l\in[k]$ such that the letter $x_l$ is not an optimal point of $F_{l, \sigma^l(w)}$. Let $x'_l\in X_l$ be an optimal point of $F_{l, \sigma^l(w)}$ and let $w'\in X^{(k)}$ be the word arising from $w$ by replacing the $l$-th coordinate with $x'_l$. Since $\sigma^l(w')=\sigma^l(w)$, we have:
$$
F_{l, \sigma^l(w')}(x'_l)=F_{l, \sigma^l(w)}(x'_l)>F_{l, \sigma^l(w)}(x_l).
$$
By Lemma~\ref{prop9}, there are $A\in \mathbb{R}_+$, $B\in\mathbb{R}$ such that
$$
T(w')=A\cdot F_{l, \sigma^l(w')}(x'_l)+B,\;\;\;T(w)=A\cdot F_{l, \sigma^l(w)}(x_l)+B.
$$
Consequently $T(w')>T(w)$, which contradicts with the assumption that $w$ is an optimal point of $T$.

Conversely, let $w=(x_1,\ldots, x_k)\in X^{(k)}$ be such that  for every $l\in[k]$ the letter $x_l$ is an optimal point of the map $F_{l, \sigma^l(w)}$. We show that $w$ is an optimal point of $T$. Let $v=(y_1, \ldots, y_k)\in X^{(k)}$ be an arbitrary optimal point of the map $T$. Let us define the words  $w_l\in X^{(k)}$ ($0\leq l\leq k$)  as follows: $w_0:=w$, $w_k:=v$ and $w_l:=(y_1, \ldots, y_l, x_{l+1},\ldots, x_k)$ for every $l\in [k-1]$. In particular, for each $l\in[k]$ the two words $w_{l-1}$ and $w_l$ differ only in the $l$-th position, which is equal to $x_l$ in $w_{l-1}$ and to $y_l$ in $w_l$. Hence, by Lemma~\ref{prop9}, there are $A\in\mathbb{R}_+$ and $B\in\mathbb{R}$ such that
$$
T(w_{l-1})=A\cdot F_{l, \sigma^l(w_{l-1})}(x_l)+B,\;\;\;T(w_l)=A\cdot F_{l, \sigma^l(w_l)}(y_l)+B.
$$
Since the letter $x_l$ is an optimal point of $F_{l, \sigma^l(w)}$ and   $\sigma^l(w_{l-1})=\sigma^l(w_l)=\sigma^l(w)$, we have $T(w_{l-1})\geq T(w_l)$. Consequently, we obtain the inequalities:
$$
T(w)=T(w_0)\geq T(w_1)\geq \ldots\geq T(w_k)=T(v).
$$
Since  $v$ is an optimal point of $T$, we have $T(w)=T(v)$. Thus $w$ is an optimal point of $T$.
\qed
\end{proof}

The proof of the next proposition is based on various properties of the maps $D_l$, $c_l$ ($l\in[k]_0$), which we derive in Section~\ref{sqa1} (see
 Lemmas~\ref{prop7},~\ref{prop8}).

\begin{proposition}\label{trud}
If $w=(x_1, \ldots, x_k)\in X^{(k)}$ is an optimal point of the map $T$, then $w$ is a non--decreasing sequence.
\end{proposition}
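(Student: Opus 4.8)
The plan is to establish the claim one coordinate at a time, exploiting the characterization of optimal points from Proposition~\ref{p8}. By that proposition, since $w$ is an optimal point of $T$, for every $l\in[k]$ the letter $x_l$ is an optimal point of the map $F_{l,\sigma^l(w)}$. By Proposition~\ref{prop10}, such an optimal point can be taken to be $s_l := $ the smallest $x\in[l-1,n-1]$ with $c_l(x+1)\leq D_l(\sigma^l(w))/l$ (equivalently $t_l+l-1$ with $t_l$ as in Theorem~\ref{mt}); more care is needed here, because $F_{l,\sigma^l(w)}$ could a priori have several optimal points, so I would first pin down the full set of maximizers. From~(\ref{uss}) and the fact that $c_l$ is non-increasing (Lemma~\ref{prop7}(iv)), the difference $F_{l,w}(x+1)-F_{l,w}(x)=l\cdot c_l(x+1)-D_l(w)$ is itself non-increasing in $x$, so $F_{l,w}$ is unimodal (concave along the integers): it is non-decreasing up to some index and non-increasing afterwards. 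Hence its set of optimal points is an interval of consecutive integers $\{s_l, s_l+1, \ldots, s_l'\}$, where $s_l$ is exactly the threshold from Proposition~\ref{prop10} and $s_l'\geq s_l$ is the largest index at which the increment is still $\geq 0$. So the first step is: for each $l$, $x_l\in\{s_l,\ldots,s_l'\}$, and in particular $x_l\geq s_l\geq l$.

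The core of the argument is then to compare the thresholds $s_l$ for successive values of $l$ and show $x_l\leq x_{l+1}$. The natural route is to prove the chain of inequalities $s_l \le x_l$ and $x_l \le s_{l+1}'$, or more robustly to show directly that for the particular word $w$ the increment test that defines optimal points of $F_{l+1,\sigma^{l+1}(w)}$ at the value $x_l$ comes out non-negative, which forces $x_l$ to lie in the optimal interval for the $(l+1)$-st coordinate and hence $x_{l+1}\ge x_l$ by optimality of $x_{l+1}$ as the corresponding coordinate. Concretely, I would unfold $D_{l+1}(\sigma^{l+1}(w))$ using the recursion $D_l(\sigma^l(w))=c_l(x_{l+1})+(x_{l+1}-l)D_{l+1}(\sigma^{l+1}(w))$ from Lemma~\ref{prop8}(ii), and combine it with the defining inequality $l\cdot c_l(x_l+1)\le D_l(\sigma^l(w))$ (which holds because $x_l$ is optimal for $F_{l,\sigma^l(w)}$, so in particular the increment from $x_l$ to $x_l+1$ is $\le 0$) together with the strict version $l\cdot c_l(x_l)\ge D_l(\sigma^l(w))$ at the left end of the optimal interval. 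Using monotonicity and the sign of $c_l$, these should combine to give $(l+1)\cdot c_{l+1}(x_l+1)\le D_{l+1}(\sigma^{l+1}(w))$, i.e. the increment of $F_{l+1,\sigma^{l+1}(w)}$ from $x_l$ to $x_l+1$ is non-positive, so $x_l\ge$ the left end of the optimal interval of $F_{l+1,\sigma^{l+1}(w)}$ would be the wrong direction — rather one wants $x_l$ to not exceed its right end. I would therefore instead aim at the inequality that the increment of $F_{l+1,\sigma^{l+1}(w)}$ \emph{at} $x_l-1$ (or at $x_l$) is $\ge 0$, which is what guarantees $x_l\le x_{l+1}$; the right comparison lemma for $c_{l+1}$ versus $c_l$ (to be extracted from Lemmas~\ref{prop7} and~\ref{prop8}, presumably an inequality of the form $c_{l+1}(x)\ge$ some expression in $c_l$) is the linchpin.

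I expect the main obstacle to be precisely this last comparison between the one-step increments of $F_{l,\sigma^l(w)}$ and $F_{l+1,\sigma^{l+1}(w)}$ at a common argument: it requires relating $c_{l+1}$ to $c_l$ and $D_{l+1}$ to $D_l$ simultaneously, and the inequality has to go the correct way uniformly in $n$, $k$, $l$, and in the values $x_{l+1},\ldots,x_k$ that enter $D_{l+1}$. The monotonicity and sign properties of $r_l$ and $c_l$ from Lemma~\ref{prop7}, the identity $l\,c_l(x+1)=c_{l-1}(x)-c_{l-1}(x+1)$, and the recursion for $D_l$ from Lemma~\ref{prop8} should be exactly the inputs needed, but assembling them into the single clean inequality that forces $x_l\le x_{l+1}$ is the delicate part. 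Once that inequality is in hand, running it for $l=1,2,\ldots,k-1$ yields $x_1\le x_2\le\cdots\le x_k$, which is the assertion of Proposition~\ref{trud}. \qed
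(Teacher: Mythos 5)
Your proposal correctly identifies the scaffolding the paper also uses --- Proposition~\ref{p8} to reduce to coordinatewise optimality, the increment formula~(\ref{uss}), the recursion $D_l(\sigma^l(w))=c_l(x_{l+1})+(x_{l+1}-l)D_{l+1}(\sigma^{l+1}(w))$ from Lemma~\ref{prop8}~(ii), and the monotonicity of $c_l$ --- but it stops exactly where the proof actually begins. The ``single clean inequality'' you defer to is the entire content of the proposition, you explicitly flag it as not yet derived, and at one point you even argue yourself into the wrong direction before reversing course. That is a genuine gap, not a detail. Concretely, the paper combines the two one-sided increment conditions $l\cdot c_l(y+1)\leq D_l(v)$ (from optimality of $y=x_l$) and $(l-1)\cdot c_{l-1}(x)\geq D_{l-1}(yv)$ (from optimality of $x=x_{l-1}$), multiplies the first by $y-l+1>0$, substitutes the recursion, and uses the telescoping identity $l\,c_l(y+1)=c_{l-1}(y)-c_{l-1}(y+1)$ to arrive at
\[
(l-1)\bigl(c_{l-1}(x)-c_{l-1}(y+1)\bigr)\;\geq\;(y-l+2)\,c_{l-1}(y)-y\,c_{l-1}(y+1)
\;=\;\frac{\binom{n-y-1}{k-l+1}}{(l-1)!\,\binom{n}{k}},
\]
the last equality being Lemma~\ref{prop7}~(v). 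This is the comparison lemma you were looking for, and it is an identity for the right-hand side, not an inequality between $c_{l+1}$ and $c_l$ of the shape you guessed.

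A second, structural omission: the right-hand side above is positive only when $y\leq n-k+l-2$; for larger $y$ the binomial coefficient vanishes and the argument gives nothing. The paper therefore needs a case split that your plan does not anticipate: for $y\geq n-k+l-1$ and $l\geq 3$ one uses $D_{l-1}(yv)>0$ (Lemma~\ref{prop8}~(iii)) to force $c_{l-1}(x)>0$, which by the explicit formula for $c_{l-1}$ bounds $x\leq n-k+l-2<y$; the case $l=2$ requires yet another separate computation. Finally, your intermediate plan of showing that $x_l$ lies in the optimal \emph{interval} of $F_{l+1,\sigma^{l+1}(w)}$ would not by itself yield $x_{l+1}\geq x_l$, since $x_{l+1}$ could sit anywhere in that interval; the paper avoids this by working directly with the two one-sided increment inequalities rather than with the interval structure.
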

\begin{proof}
Let $w=(x_1, \ldots, x_k)$ be  an optimal point of the map $T$. By Proposition~\ref{p8}, we see that for every $l\in[k]$ the letter $x_l$ is an optimal point of the map $F_{l, \sigma^l(w)}$. Let us fix $l\in[k]\setminus\{1\}$ and let us denote $x:=x_{l-1}$, $y:=x_l$, $v:=\sigma^l(w)$. We have to show that $x\leq y$. We can assume that $x\neq l-1$ and $y\neq n-1$. Since $x$ and $y$ are optimal points of $F_{l-1 yv}$ and $F_{l, v}$, respectively, we obtain:
$$
F_{l,v}(y+1)-F_{l,v}(y)\leq 0, \;\;\;F_{l-1, yv}(x)-F_{l-1,yv}(x-1)\geq 0.
$$
By~(\ref{uss}), we have
\begin{equation}\label{w1}
l\cdot c_l(y+1)\leq D_l(v),\;\;\;(l-1)\cdot c_{l-1}(x)\geq  D_{l-1}(yv).
\end{equation}
Since $y-l+1>0$, we obtain from the first of the  inequalities in~(\ref{w1}):
\begin{equation}\label{wwww1}
(y-l+1)\cdot l\cdot c_l(y+1)\leq (y-l+1)\cdot D_l(v).
\end{equation}
But from Lemma~\ref{prop8}~(ii), we have
\begin{equation}\label{w2}
(y-l+1)\cdot D_l(v)=D_{l-1}(yv)-c_{l-1}(y).
\end{equation}
The second inequality in~(\ref{w1}) gives:
\begin{equation}\label{www1}
D_{l-1}(yv)-c_{l-1}(y)\leq (l-1)\cdot c_{l-1}(x)-c_{l-1}(y).
\end{equation}
From~(\ref{wwww1})--(\ref{www1}), we obtain:
\begin{equation}\label{ww1}
(y-l+1)\cdot l\cdot c_l(y+1)\leq (l-1)\cdot c_{l-1}(x)-c_{l-1}(y).
\end{equation}
Since  $l\cdot c_{l}(y+1)=c_{l-1}(y)-c_{l-1}(y+1)$ (see Lemma~\ref{prop7}~(iii)), we obtain
$$
(y-l+1)\cdot (c_{l-1}(y)-c_{l-1}(y+1))\leq (l-1)\cdot c_{l-1}(x)-c_{l-1}(y),
$$
or equivalently:
\begin{equation}\label{lin}
(l-1)\cdot (c_{l-1}(x)-c_{l-1}(y+1))\geq (y-l+2)\cdot c_{l-1}(y)-y\cdot c_{l-1}(y+1).
\end{equation}
But the right side of~(\ref{lin}) is equal to $\frac{{n-y-1\choose k-l+1}}{(l-1)!\cdot {n\choose k}}$ (see Lemma~\ref{prop7}~(v)), which in the case $y\leq n-k+l-2$ is a positive number.
Consequently, we have $c_{l-1}(x)-c_{l-1}(y+1)>0$ in this case. Since the map $c_{l-1}$ is non--increasing (see Lemma~\ref{prop7}~(iv)), we have $x\leq y$. So,  we can  assume  $y\geq n-k+l-1$. In the case $l\geq 3$ we have $D_{l-1}(yv)>0$ by Lemma~\ref{prop8}~(iii), and hence, by~(\ref{w1}), we obtain in this case: $c_{l-1}(x)>0$. But then, directly from the definition of the map $c_{l-1}$, we have
$$
c_{l-1}(x)=\frac{k\cdot (x-l+1)!\cdot (n-x)!}{(l-1)\cdot (l-2)\cdot n!}\sum\limits_{j=0}^{l-3}{k-1\choose j}{n-k\choose x-j-1}>0.
$$
Thus there must be $j\in[l-3]_0$ such that $n-k\geq x-1-j$. Consequently $x\leq n-k+1+j\leq n-k+l-2<y$. Hence, we can assume $l=2$. Then by~(\ref{w2}), we have $D_1(yv)-c_1(y)=(y-1)\cdot D_2(v)$ and by  the second inequality in~(\ref{w1}), we have $c_1(x)\geq D_1(yv)$. Hence
$c_1(x)-c_1(y)\geq (y-1)\cdot D_2(v)>0$, where the last inequality follows from Lemma~\ref{prop8}~(iii). Since the map $c_1$ is non--increasing, we obtain $x<y$, which finishes the proof.\qed
\end{proof}

\section{The proofs of Theorems~\ref{mt}-\ref{mtt}}\label{secpr}

The main results are a straightforward consequence of Theorem~\ref{tt1} and Propositions~\ref{proo8}-\ref{trud}.

\begin{proof}[of Theorem~\ref{mt}]
Let $w_0:=(s_1, \ldots, s_k)$ be a sequence constructed as in Theorem~\ref{mt}. By Proposition~\ref{proo8}, we see that for every $l\in[k]$ the right side of~(\ref{ineqqq}) is equal to $D_l(\sigma^l(w_0))/l$. Thus  for every $l\in[k]$ the number $t_l=s_l-l+1$ is the smallest number $x\in[n-l]$ which satisfies $d_l(x)\leq D_l(\sigma^l(w_0))/l$, and hence, by Proposition~\ref{prop10},  the number $s_l$  is an optimal point of the map $F_{l, \sigma^l(w_0)}$. By Proposition~\ref{p8}, we obtain that $w_0$ is an optimal point of the map $T$. Moreover, the sequence $w_0$ is non-decreasing by Proposition~\ref{trud}. Hence, we see by the definition of the map $T$ and by Theorem~\ref{tt1} that $w_0$ is an optimal sequence. \qed
\end{proof}

\begin{proof}[of Theorem~\ref{mtt}]
Let us denote $w_0:=(s_1, \ldots, s_k)$. By Proposition~\ref{proo8}, the expression~(\ref{expr}) is equal to $-D_0(w_0)$, which, by the definition of the map $D_0$, is equal to $T_0(w_0)=T(w_0)$. Hence, by the definition of the map $T$ and by Theorem~\ref{tt1}, this expression is equal to the probability of success for the policy described in Proposition~\ref{p1}.\qed
\end{proof}

\section{The auxiliary properties of the maps $r_l$, $c_l$, $D_l$ and $F_{l, w}$}\label{sqa1}

Let us  define for every $l\in[k]$ the maps $a_l,b_l\colon [n]\to\mathbb{R}$ as follows:
$$
a_l(x):=\frac{1}{{n \choose x}}\sum\limits_{j=0}^l{k\choose j}{n-k\choose x-j},\;\;\;b_l(x):=\frac{1}{l{n \choose x}}\sum\limits_{j=0}^lj{k\choose j}{n-k\choose x-j}.
$$

\begin{lemma}\label{prop1}
For all $x\in[n]$, $l\in[k-1]$ we have:
\begin{eqnarray}
a_{l+1}(x)-a_l(x)&=&\gamma(x,l+1),\label{ajj1}\\
(l+1)\cdot b_{l+1}(x)-l\cdot b_l(x)&=&(l+1)\cdot\gamma(x,l+1)\label{ajj2},
\end{eqnarray}
and for all $x\in[n-1]$ and $l\in[k]$ we have:
\begin{eqnarray}
a_l(x)-a_{l}(x+1)&=&\frac{k-l}{n-x}\cdot\gamma(x,l),\label{ajj3}\\
(x+1)\cdot b_l(x)-x\cdot b_l(x+1)&=&\frac{(x+1)\cdot(k-l)}{n-x}\cdot\gamma(x,l)\label{bi},
\end{eqnarray}
where $\gamma(x,l):={k\choose l}{n-k\choose x-l}/{n\choose x}$. In particular, for all $x\in[1,n]$ and $l\in[1,k]$ we have
\begin{eqnarray}
x\cdot a_{l-1}(x-1)-(x-l)\cdot a_{l-1}(x)&=&l\cdot a_{l}(x),\label{acor}\\
x\cdot b_{l-1}(x-1)-(x-l)\cdot b_{l-1}(x)&=&l\cdot b_{l}(x)\label{bcor}.
\end{eqnarray}
\end{lemma}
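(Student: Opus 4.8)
The plan is to read all six identities off the numerator sums
$$
S_l(x):=\sum_{j=0}^{l}\binom{k}{j}\binom{n-k}{x-j},\qquad T_l(x):=\sum_{j=0}^{l}j\binom{k}{j}\binom{n-k}{x-j},
$$
so that $a_l(x)=S_l(x)/\binom{n}{x}$ and $b_l(x)=T_l(x)/\bigl(l\binom{n}{x}\bigr)$, with the convention $\binom{m}{r}=0$ for $r<0$ in force. Identities~(\ref{ajj1}) and~(\ref{ajj2}) are immediate: raising the summation bound from $l$ to $l+1$ adjoins exactly $\binom{k}{l+1}\binom{n-k}{x-l-1}$ to $S_l(x)$ and $(l+1)\binom{k}{l+1}\binom{n-k}{x-l-1}$ to $T_l(x)$, so that $a_{l+1}(x)-a_l(x)$ and $(l+1)b_{l+1}(x)-lb_l(x)=\bigl(T_{l+1}(x)-T_l(x)\bigr)/\binom{n}{x}$ equal $\gamma(x,l+1)$ and $(l+1)\gamma(x,l+1)$ respectively.

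For~(\ref{ajj3}) and~(\ref{bi}) I would clear denominators via $\binom{n}{x+1}=\frac{n-x}{x+1}\binom{n}{x}$, reducing~(\ref{ajj3}) to $(n-x)S_l(x)-(x+1)S_l(x+1)=(k-l)\binom{k}{l}\binom{n-k}{x-l}$ and~(\ref{bi}) to $(n-x)T_l(x)-x\,T_l(x+1)=l(k-l)\binom{k}{l}\binom{n-k}{x-l}$. The engine is the transfer identity
$$
(n-x)\binom{n-k}{x-j}-(x+1)\binom{n-k}{x+1-j}=(k-j)\binom{n-k}{x-j}-j\binom{n-k}{x+1-j},
$$
obtained by writing $n-x=(n-k-x+j)+(k-j)$ and $x+1=(x+1-j)+j$ and applying $(m-r)\binom{m}{r}=m\binom{m-1}{r}$ and $r\binom{m}{r}=m\binom{m-1}{r-1}$ to $\binom{n-k}{\cdot}$, the two copies of $(n-k)\binom{n-k-1}{x-j}$ cancelling. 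Multiplying by $\binom{k}{j}$ and using $(k-j)\binom{k}{j}=k\binom{k-1}{j}$, $j\binom{k}{j}=k\binom{k-1}{j-1}$, the $S_l$-sum becomes $k\sum_{j=0}^{l}\bigl(\binom{k-1}{j}\binom{n-k}{x-j}-\binom{k-1}{j-1}\binom{n-k}{x+1-j}\bigr)$, and reindexing the negative part by $j\mapsto j+1$ telescopes it to $k\binom{k-1}{l}\binom{n-k}{x-l}=(k-l)\binom{k}{l}\binom{n-k}{x-l}$. For~(\ref{bi}) one runs the same computation carrying the weight $j$ — now using the transfer in the form $(n-x)\binom{n-k}{x-j}-x\binom{n-k}{x+1-j}=(k-j)\binom{n-k}{x-j}-(j-1)\binom{n-k}{x+1-j}$ together with the iterated absorptions $j(k-j)\binom{k}{j}=k(k-1)\binom{k-2}{j-1}$ and $j(j-1)\binom{k}{j}=k(k-1)\binom{k-2}{j-2}$ — arriving at $k(k-1)\binom{k-2}{l-1}\binom{n-k}{x-l}=l(k-l)\binom{k}{l}\binom{n-k}{x-l}$.

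The two ``in particular'' identities~(\ref{acor}) and~(\ref{bcor}) I would handle by the same recipe, now clearing with $\binom{n}{x-1}=\frac{x}{n-x+1}\binom{n}{x}$: after collecting the $S_{l-1}(x)$, resp.\ $T_{l-1}(x)$, terms they reduce to $(n-x+1)S_{l-1}(x-1)-x\,S_{l-1}(x)=l\binom{k}{l}\binom{n-k}{x-l}$ and $(n-x+1)T_{l-1}(x-1)-(x-1)T_{l-1}(x)=l(l-1)\binom{k}{l}\binom{n-k}{x-l}$, which follow from the transfer identity (or its $x$-variant) evaluated at $x-1$ exactly as above, the closing prefactor computations being $k\binom{k-1}{l-1}=l\binom{k}{l}$ and $k(k-1)\binom{k-2}{l-2}=l(l-1)\binom{k}{l}$. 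Alternatively~(\ref{acor}) can be deduced from~(\ref{ajj1}) — which gives $la_l(x)=la_{l-1}(x)+l\gamma(x,l)$ — together with~(\ref{ajj3}) applied at $x-1$ and the elementary ratio $\gamma(x,l)/\gamma(x-1,l-1)=\frac{(k-l+1)x}{l(n-x+1)}$, and likewise~(\ref{bcor}) from~(\ref{ajj2}) and~(\ref{bi}).

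Everything here is routine binomial algebra, so the main obstacle is not conceptual but the bookkeeping inside the telescopes. After the shift $j\mapsto j+1$ the subtracted sum must be checked to run over one fewer index, its missing end term vanishing only by the convention $\binom{m}{r}=0$ for $r<0$ (and one should confirm that the out-of-range upper indices still satisfy $r\le m$), so that precisely one term $\binom{n-k}{x-l}$ survives; matching its binomial prefactor to the stated right-hand side is then the short list of absorption identities above. I do not anticipate any genuine difficulty beyond keeping these boundary contributions straight.
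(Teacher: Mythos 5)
Your proof is correct and follows essentially the same route as the paper's (very terse) proof: (\ref{ajj1})--(\ref{ajj2}) are read off the definitions, (\ref{ajj3})--(\ref{bi}) are established by what the paper phrases as induction on $l$ and you phrase equivalently as a telescoped sum driven by a one-term transfer identity, and (\ref{acor})--(\ref{bcor}) follow from the first four identities together with the definitions (your ``alternative'' derivation at the end is exactly the paper's route). All the absorption identities and boundary terms you list check out, so no gaps.
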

\begin{proof}
The  identities~(\ref{ajj1})-(\ref{ajj2})  directly follow from the definitions of the maps $a_l$, $b_l$; the  identities~(\ref{ajj3})-(\ref{bi}) can be easily  proved by induction on $l$. These four identities together with the definitions of the maps $a_l$, $b_l$ imply the identities (\ref{acor})--(\ref{bcor}).\qed
\end{proof}

\begin{lemma}\label{propppp}
The maps $r_l$ have the following properties:
\begin{itemize}
\item[(i)] $r_l(x)=\frac{(x-l-1)!}{x!}\cdot\left(b_l(x)-a_l(x)+1\right)$ for  $l\in[k]$ and $x\in [l,n]$,
\item[(ii)] $r_0(x)=-\sum_{j=2}^x r_1(j)$ for $x\in[n]$,
\item[(iii)] $l\cdot r_l(x)=r_{l-1}(x-1)-r_{l-1}(x)$ for $l\in[\leq k]$ and $x\in [l,n]$.
\end{itemize}
\end{lemma}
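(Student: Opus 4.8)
The plan is to verify the three identities essentially by unwinding the definitions of $r_l$ (formula (\ref{rere})) together with the auxiliary maps $a_l$, $b_l$ and the identities of Lemma~\ref{prop1}. Part~(i) is a direct computation: for $1\le l\le k$ the definition of $r_l(x)$ reads $\frac{(x-l-1)!}{x!}\bigl(1-\frac{1}{l\binom{n}{x}}\sum_{j=0}^l(l-j)\binom{k}{j}\binom{n-k}{x-j}\bigr)$, and one only has to observe that $\frac{1}{l\binom nx}\sum_{j=0}^l(l-j)\binom kj\binom{n-k}{x-j}=a_l(x)-b_l(x)$, which is immediate from the definitions of $a_l$ and $b_l$ by splitting the coefficient $l-j$ into $l$ and $-j$. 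Rearranging gives exactly (i).

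\textbf{Part (iii).} I would prove $l\cdot r_l(x)=r_{l-1}(x-1)-r_{l-1}(x)$ by cases according to the value of $l$. For $l<0$ both sides vanish by convention; the case $l=0$ is vacuous since then $l-1<0$ forces $r_{l-1}\equiv 0$ and the claimed identity would read $0=0$ only if the left side is interpreted as $0\cdot r_0(x)$, so effectively the nontrivial content is $l\in[k]$ (and, separately, the boundary case producing $r_0$). For $1\le l\le k$ with $l\ge 2$, substitute the formula from part~(i) on all three occurrences: the factorial prefactors satisfy $\frac{(x-l-1)!}{x!}\cdot l=\frac{(x-l)!}{(x-1)!}\cdot\frac{l}{(x-l)\,x}$, while $r_{l-1}(x-1)$ carries the prefactor $\frac{(x-l-1)!}{(x-1)!}$ and $r_{l-1}(x)$ the prefactor $\frac{(x-l)!}{x!}$; after clearing these the identity reduces to a linear relation among $a_l(x),a_{l-1}(x),a_{l-1}(x-1)$ and the analogous $b$'s, namely precisely (\ref{acor}) and (\ref{bcor}) of Lemma~\ref{prop1}, plus the trivial cancellation of the constant $1$'s. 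The remaining case $l=1$ is where $r_0$ enters: one must check $r_1(x)=r_0(x-1)-r_0(x)$ directly from the explicit formula for $r_0$ in (\ref{rere}), which telescopes because the $\frac1x$ and $\binom{n-x-1}{k}$ terms in $r_0$ are designed so that their first differences reproduce the corresponding terms of $r_1$; this is a short but slightly fiddly manipulation of binomial coefficients using $\binom{n-x-1}{k}-\binom{n-(x-1)-1}{k}=-\binom{n-x-1}{k-1}$.

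\textbf{Part (ii).} Given part~(iii) with $l=1$, one has $r_1(x)=r_0(x-1)-r_0(x)$ for $x\in[1,n]$, so $\sum_{j=2}^x r_1(j)=r_0(1)-r_0(x)$; it then suffices to check $r_0(1)=0$, which is immediate from the $l=0$ line of (\ref{rere}) since at $x=1$ the sum has one term $\binom{n-2}{k-1}/1$ and $\frac11-\frac kn-\frac{\binom{n-2}{k}}{\binom nk}-\frac{\binom{n-2}{k-1}}{\binom nk}=1-\frac kn-\frac{\binom{n-1}{k}}{\binom nk}=1-\frac kn-\frac{n-k}{n}=0$ using Pascal's rule and $\binom{n-1}{k}/\binom nk=(n-k)/n$.

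\textbf{Main obstacle.} The routine-but-delicate part is the $l=1$ computation feeding parts~(ii) and (iii): the map $r_0$ is defined by an ad hoc-looking closed form, and one must verify that its discrete derivative matches $r_1$ exactly, term by term, which requires careful bookkeeping of the binomial identities and of the single summand that peels off when the summation range shifts from $x-1$ to $x$. Everything else reduces mechanically to Lemma~\ref{prop1}.
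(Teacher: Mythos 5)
Your proof is correct and follows essentially the same route as the paper's: part (i) by unwinding the definitions of $r_l$, $a_l$, $b_l$; part (iii) for $l\geq 2$ by substituting (i) and invoking the identities (\ref{acor})--(\ref{bcor}) of Lemma~\ref{prop1}; and a direct binomial manipulation (Pascal's rule) connecting $r_0$ and $r_1$. The only organizational difference is that you prove (iii) for $l=1$ directly and then deduce (ii) by telescoping together with $r_0(1)=0$, whereas the paper first proves (ii) by writing $r_1(j)$ explicitly as a telescoping difference and then reads off the $l=1$ case of (iii); the underlying computation is the same.
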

\begin{proof}
The item (i) follows directly from the definition of the map $r_l$. To show (ii), we can write by the definition of  $r_1$:
$$
r_1(j)=\left(\frac{1}{j-1}-\frac{1}{j}\right)-\frac{(n-j)!\cdot (j-2)!}{n!}\cdot{n-k\choose j},\;\;j\in[1,n].
$$
Hence, by the following easily verifiable identity
$$
\frac{(n-j)!\cdot (j-2)!}{n!}\cdot{n-k\choose j}=\frac{1}{{n\choose k}}
\left(
\frac{{n-j\choose k}}{j-1}-\frac{{n-(j+1)\choose k}}{j}\right)-\frac{{n-j-1\choose k-1}}{j{n\choose k}},
$$
we obtain:
$$
r_1(j)=\left(\frac{1}{j-1}-\frac{1}{j}\right)+\frac{{n-j-1\choose k-1}}{j{n\choose k}}-
\frac{1}{{n\choose k}}
\left(
\frac{{n-j\choose k}}{j-1}-\frac{{n-(j+1)\choose k}}{j}\right),\;\;j\in[1,n].
$$
Hence, the item (ii) in the case $x\in[1,n]$ simply follows from the above equalities. The case  $x=1$ can be directly verified. The item~(iii) in the case $l\leq 0$  simply follows from the definition of the map $r_l$. In the case $l=1$ it follows from the item~(ii).
If $l>1$, then for every $x\in[l,n]$ we can write by the item (i):
\begin{eqnarray*}
r_{l-1}(x-1)&=&\frac{(x-l-1)!}{(x-1)!}(b_{l-1}(x-1)-a_{l-1}(x-1)+1),\\
r_{l-1}(x)&=&\frac{(x-l)!}{x!}\left(b_{l-1}(x)-a_{l-1}(x)+1\right).
\end{eqnarray*}
Now, we can use the equalities~(\ref{acor})--(\ref{bcor}) from Lemma~\ref{prop1} and obtain that the difference $r_{l-1}(x-1)-r_{l-1}(x)$ is equal to
$$
\frac{(x-l-1)!}{x!}\left(l\cdot b_{l}(x)-l\cdot a_{l}(x)+l\right)=l\cdot r_{l}(x),
$$
which finishes the proof of Lemma~\ref{propppp}.
\qed
\end{proof}

\begin{lemma}\label{prop7}
The maps $c_l$ ($l\in[k]_0$) have the following properties:
\begin{itemize}
\item[(i)] $c_{l}(x)=\frac{b_{l-1}(x)}{l\cdot l! \cdot{x\choose l}}$ for $l\in[1,k]$, $x\in[l,n]$,
\item[(ii)] $c_l(x)=r_{l-1}(x)-(x-l)\cdot r_l(x)$ for  $l\in[k]_0$, $x\in[l,n]$,
\item[(iii)] $l\cdot c_{l}(x)=c_{l-1}(x-1)-c_{l-1}(x)$ for  $l\in[k]$, $x\in [l,n]$,
 \item[(iv)] for every $l\in[k]$ the map $c_l$ is non--increasing,
\item[(v)] $(x+1-l)\cdot c_l(x)-x\cdot c_l(x+1)=\frac{{n-x-1\choose k-l}}{l!\cdot {n\choose k}}$ for $l\in[k]$ and $x\in [l,n-1]$.
\end{itemize}
\end{lemma}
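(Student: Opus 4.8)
The plan is to establish the five items essentially in the listed order, with (i) serving as the master identity: once $c_l$ is written through $b_{l-1}$, the remaining parts collapse to the identities of Lemmas~\ref{prop1} and~\ref{propppp} plus mechanical factorial bookkeeping.

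For (i) (which is stated only for $l\ge 2$) I would substitute the branch $1<l\le k$ of~(\ref{aj}) into $c_l(x)=d_l(x-l)$, getting a scalar multiple of $\sum_{j=0}^{l-2}\binom{k-1}{j}\binom{n-k}{x-j-1}$; the same sum comes out of the defining expression for $b_{l-1}(x)$ after applying $j\binom{k}{j}=k\binom{k-1}{j-1}$ and reindexing, giving $b_{l-1}(x)=\frac{k}{(l-1)\binom{n}{x}}\sum_{j=0}^{l-2}\binom{k-1}{j}\binom{n-k}{x-j-1}$, so that matching prefactors reduces to the elementary identity $l!\binom{x}{l}\binom{n}{x}=n!/((x-l)!\,(n-x)!)$. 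For (ii) with $l\ge 2$, I would expand $r_{l-1}(x)$ and $r_l(x)$ by Lemma~\ref{propppp}(i) and find $r_{l-1}(x)-(x-l)r_l(x)=\frac{(x-l)!}{x!}\left((b_{l-1}(x)-a_{l-1}(x))-(b_l(x)-a_l(x))\right)$, the bracket being $\frac1l b_{l-1}(x)$ by~(\ref{ajj1})--(\ref{ajj2}), hence $=c_l(x)$ by (i); the case $l=0$ is immediate from the formulas for $d_0$, $r_0$ and the convention $r_{-1}\equiv 0$, and $l=1$ reduces after a short computation to Pascal's identity together with the symmetry $\binom{n-k}{x}/\binom{n}{x}=\binom{n-x}{k}/\binom{n}{k}$.

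Item (iii) then follows uniformly in $l\in[k]$: expanding $c_{l-1}(x-1)$ and $c_{l-1}(x)$ by (ii), simplifying the $r_{l-2}$-terms with Lemma~\ref{propppp}(iii), and reading off $l\,c_l(x)$ from (ii) once more leaves exactly the identity $r_{l-1}(x)-r_{l-1}(x-1)=-l\,r_l(x)$, which is again Lemma~\ref{propppp}(iii). For (v) I would again start from (i): for $l\ge 2$ one gets $(x+1-l)c_l(x)-x\,c_l(x+1)=\frac{(x-l+1)!}{l\,x!\,(x+1)}\left((x+1)b_{l-1}(x)-x\,b_{l-1}(x+1)\right)$, the parenthesized quantity is evaluated by~(\ref{bi}) with $l$ replaced by $l-1$, and collecting factorials via $(k-l+1)\binom{k}{l-1}=k!/((l-1)!\,(k-l)!)$ and $x!\,(n-x)\binom{n}{x}=n!/(n-x-1)!$ yields $\binom{n-x-1}{k-l}/(l!\binom{n}{k})$; for $l=1$ the claim $x(c_1(x)-c_1(x+1))=\binom{n-x-1}{k-1}/\binom{n}{k}$ is immediate from the formula for $d_1$.

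Finally, for (iv): $c_1$ is non-increasing because $c_1(x+1)=c_1(x)-\binom{n-x-1}{k-1}/(x\binom{n}{k})$ straight from the definition of $d_1$, while for $l\ge 2$ the explicit sum defining $d_l$ shows $c_l\ge 0$, so (v) gives $x\,c_l(x+1)=(x+1-l)c_l(x)-\binom{n-x-1}{k-l}/(l!\binom{n}{k})\le(x+1-l)c_l(x)\le x\,c_l(x)$ and hence $c_l(x+1)\le c_l(x)$. I expect the main obstacle to be purely administrative: pushing the numerous factorial and binomial rearrangements through cleanly in (i) and (v), and treating the degenerate low-index instances ($l=0$ in (ii), $l=1$ throughout) by separate direct computation, since the uniform formulas for $d_l$, $r_l$, $a_l$, $b_l$ either change shape or become indeterminate there; one should also double-check the interval endpoints against the conventions for empty sums and for $\binom{m}{r}$ with $r$ out of range.
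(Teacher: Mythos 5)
Your proposal is correct and follows essentially the same route as the paper: (i) by direct comparison of $d_l$ with $b_{l-1}$ via $j\binom{k}{j}=k\binom{k-1}{j-1}$, (ii) via Lemma~\ref{propppp}(i) and the identities (\ref{ajj1})--(\ref{ajj2}) with the low-index cases $l\in\{0,1\}$ done by hand, (iii) via (ii) and Lemma~\ref{propppp}(iii), and (v) via (i) and (\ref{bi}). The only (harmless) deviation is item (iv), which you derive from (v) together with the nonnegativity of $c_l$ for $l\ge 2$ (plus a direct computation for $l=1$) rather than from (iii) and the nonnegativity of $c_{l+1}$; your variant even avoids the paper's separate explicit computation for the case $l=k$.
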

\begin{proof}
By the definition of the maps $c_l$, we can write
$$
c_l(x)=\left\{
\begin{array}{ll}
-1+\frac{k\cdot x}{n}+\frac{{n-x-1\choose k}}{{n\choose k}}+\frac{x}{{n\choose k}}\sum\limits_{j=1}^{x}\frac{{n-j-1\choose k-1}}{j},&{\rm  if}\;\;l=0,\\
\\
-\frac{k}{n}-\frac{1}{{n\choose k}}\sum\limits_{j=1}^{x-1}\frac{{n-j-1\choose k-1}}{j},&{\rm  if}\;\;l=1,\\
\\
\frac{k\cdot (x-l)!\cdot (n-x)!}{l\cdot (l-1)\cdot n!}\sum\limits_{j=0}^{l-2}{k-1\choose j}{n-k\choose x-j-1},&{\rm  if}\;\;1<l\leq k.
\end{array}
\right.
$$
Hence, the item (i) easily follows from the definition of the map $b_{l-1}$ and from the identity $j{k\choose j}=k{k-1\choose j-1}$ for $j\in\mathbb{Z}$. The item~(ii) in the case $l\in\{0,1\}$ directly follows  from the definitions of the maps $c_l$, $r_{l-1}$, $r_l$. In the case $l\in[1,k]$, we can use the item (i) for the left side and Lemma~\ref{propppp}~(i) for the right side, and then the claim easily follows from the identities~(\ref{ajj1})-(\ref{ajj2}) from Lemma~\ref{prop1}. To show the item~(iii), we see by (ii) that the difference $c_{l-1}(x-1)-c_{l-1}(x)$ is equal to:
$$
(r_{l-2}(x-1)-r_{l-2}(x))-(x-l)\cdot r_{l-1}(x-1)+(x-l+1)\cdot r_{l-1}(x).
$$
By  Lemma~\ref{propppp}~(iii), we have $r_{l-2}(x-1)-r_{l-2}(x)=(l-1)\cdot r_{l-1}(x)$. Hence
$$
c_{l-1}(x-1)-c_{l-1}(x)=x\cdot r_{l-1}(x)-(x-l)\cdot r_{l-1}(x-1).
$$
Again, by Lemma~\ref{propppp}~(iii), we have $r_{l-1}(x-1)=r_{l-1}(x)+l\cdot r_l(x)$, and hence
$$
c_{l-1}(x-1)-c_{l-1}(x)=l(r_{l-1}(x)-(x-l)\cdot r_l(x)).
$$
The claim now follows from the item (ii). To show~(iv), we obtain by the item~(iii)  that for every  $l\in[k-1]$ and $x\in [l+1,n]$ the difference $c_l(x-1)-c_l(x)$ is equal to $(l+1)c_{l+1}(x)$, which is a nonnegative number  by the definition of the map $c_{l+1}$. Hence the map $c_l$ is non--increasing for every $l\in[k-1]$. If $l=k=1$,  then the item~(iv) follows directly from the definition of the map $c_l$. If $l=k>1$, then by the definition of the map $c_l$, we obtain
$$
c_k(x)=\frac{(x-k)!}{n(k-1)(x-1)!}-\frac{(n-k)!}{(k-1)n!}.
$$
Hence, we see  that also in this case the map $c_l$ is  non-increasing. As for the  item~(v), the case $l=1$ follows directly from the item~(iii) and from the definition of the map $c_2$. In the case $l\in[1,k]$, by  the item~(i), we obtain  that the difference $(x+1-l)c_l(x)-x c_l(x+1)$ is equal to
$$
\frac{(x-l+1)!}{l (x+1)!}\left((x+1)b_{l-1}(x)-x b_{l-1}(x+1)\right),
$$
which is equal to $\frac{{n-x-1\choose k-l}}{l!\cdot {n\choose k}}$ by the equality~(\ref{bi}) from Lemma~\ref{prop1}.
\qed
\end{proof}

\begin{lemma}\label{prop8}
The maps $D_l$ ($l\in[k]_0$) have the following properties:
\begin{itemize}
\item[(i)]  $D_k(\epsilon)=\delta_{k,n}$,
\item[(ii)] $D_l(w)=(x_1-l)\cdot D_{l+1}(\sigma(w))+c_{l}(x_1)$ for each  $l\in[k-1]_0$ and $w\in X^{(k-l)}$, where $x_1$ denotes the first letter of $w$,
\item[(iii)] $D_l(w)>0$ for each $l\in[1,k]$ and  $w\in X^{(k-l)}$.
\end{itemize}
\end{lemma}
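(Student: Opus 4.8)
The plan is to prove the three items in the order (i), (ii), (iii): item (iii) will be a downward induction on $l$ whose base case is (i) and whose inductive step is (ii).

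Item (i) is a direct computation. Since $T_k=\xi_{k,n}$ and the leading-letter convention gives $D_k(\epsilon)=r_{k-1}(n-1)-T_k(\epsilon)$, it suffices to check $r_{k-1}(n-1)-\xi_{k,n}=\delta_{k,n}$ from the closed forms. For $k>1$ I would substitute $x=n-1$ into the third line of~(\ref{rere}): in the inner sum $\sum_{j=0}^{k-1}(k-1-j)\binom{k}{j}\binom{n-k}{n-1-j}$ the binomial $\binom{n-k}{n-1-j}$ vanishes whenever $j<k-1$ and the weight $k-1-j$ vanishes at $j=k-1$, so the whole sum drops and $r_{k-1}(n-1)=(n-k-1)!/(n-1)!$; subtracting $\xi_{k,n}=k(n-k-1)!/n!$ leaves $(n-k)!/n!$. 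For $k=1$ I would use instead the $l=0$ line of~(\ref{rere}) at $x=n-1$ (so $\binom{n-x-1}{k}=\binom{0}{1}=0$), obtaining $r_0(n-1)=\frac{1}{n-1}-\frac1n-\frac1n\sum_{j=1}^{n-1}\frac1j$, and subtract $\xi_{1,n}=\frac{1}{n(n-1)}$. Both outcomes agree with~(\ref{ajj}).

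For item (ii), write $w=(x_1,\dots,x_{k-l})$ and $\sigma(w)=(x_2,\dots,x_{k-l})$. The engine is a pair of shift identities read straight off the definitions of $R_{l,j}$ and $\Gamma_{l,j,j'}$: namely $\Gamma_{l,1,j}(w)=(x_1-l)\,\Gamma_{l+1,1,j-1}(\sigma(w))$ for $1\le j\le k-l$, and $R_{l,j}(w)=R_{l+1,j-1}(\sigma(w))$ for $2\le j\le k-l$. Substituting these into~(\ref{uogt}) and pulling out the common factor $x_1-l$ collapses the sum (the $j=1$ term contributes $R_{l,1}(w)$, since $\Gamma_{l,1,1}(w)=x_1-l$) to
$$
T_l(w)=(x_1-l)\bigl(R_{l,1}(w)+T_{l+1}(\sigma(w))\bigr).
$$
Now $R_{l,1}(w)=r_l(x_1)-r_l(x_2)$, while~(\ref{dddd}) applied to $\sigma(w)$ — whose leading letter is $x_2$, or $n-1$ when $\sigma(w)=\epsilon$ — reads $T_{l+1}(\sigma(w))=r_l(x_2)-D_{l+1}(\sigma(w))$; the two copies of $r_l(x_2)$ cancel, so $T_l(w)=(x_1-l)\bigl(r_l(x_1)-D_{l+1}(\sigma(w))\bigr)$, and therefore
$$
D_l(w)=r_{l-1}(x_1)-T_l(w)=\bigl(r_{l-1}(x_1)-(x_1-l)r_l(x_1)\bigr)+(x_1-l)\,D_{l+1}(\sigma(w)).
$$
The bracket is exactly $c_l(x_1)$ by Lemma~\ref{prop7}(ii), which is the claimed recursion. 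I expect this to be the delicate part: no single computation is hard, but the various boundary conventions (the $x_{k-l+1}:=n-1$ rule inside $R_{l,j}$, the empty-product convention in $\Gamma$, and the edge case $k-l=1$ where $\sigma(w)=\epsilon$) all have to line up at once for both the collapse and the cancellation to go through.

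For item (iii), I would induct downward on $l$. The base case $l=k$ is (i): $D_k(\epsilon)=\delta_{k,n}=(n-k)!/n!>0$ for $k>1$. For the step, take $w=(x_1,\dots,x_{k-l})\in X^{(k-l)}=X_{l+1}\times\cdots\times X_k$; its leading letter satisfies $x_1\ge l+1$, hence $x_1-l\ge 1>0$. The recursion from (ii) gives $D_l(w)=(x_1-l)\,D_{l+1}(\sigma(w))+c_l(x_1)$, where $D_{l+1}(\sigma(w))>0$ by the inductive hypothesis and $c_l(x_1)\ge 0$ because $c_l(x_1)=d_l(x_1-l)$ and, for $l\ge 2$, the third line of~(\ref{aj}) exhibits this as a positive multiple of $\sum_{j=0}^{l-2}\binom{k-1}{j}\binom{n-k}{x_1-j-1}\ge 0$; hence $D_l(w)>0$. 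This runs the induction down to $l=2$, establishing positivity for every index $\ge 2$, which is precisely the range of indices invoked afterwards (in the proof of Proposition~\ref{trud}).
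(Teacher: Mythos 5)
Your proposal is correct and follows essentially the same route as the paper: item (i) by direct evaluation of $r_{k-1}(n-1)-\xi_{k,n}$, item (ii) via the shift identities for $R_{l,j}$ and $\Gamma_{l,1,j}$ combined with Lemma~\ref{prop7}~(ii), and item (iii) by downward induction from (i) using (ii) and $c_l\geq 0$ for $l\geq 2$ (you also correctly read the paper's convention $[1,k]=\{2,\ldots,k\}$, so the $k=1$ sign issue with $\delta_{1,n}$ never arises). The only difference is cosmetic: you factor $(x_1-l)$ out of the whole sum at once and cancel the two copies of $r_l(x_2)$, whereas the paper treats the $j=1$ term separately before collapsing the tail to $T_{l+1}(\sigma(w))$.
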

\begin{proof}
The item~(i)  directly follows from the definition of the map $D_k$ and from the formulae~(\ref{ajj}) and~(\ref{rere}) defining, respectively, the number $\delta_{k,n}$ and the map $r_{k-1}$. As for the item~(ii), for every $l\in[k-1]_0$ we obtain by Lemma~\ref{prop7}~(ii) and by the definitions of the maps $R_{l,1}$, $\Gamma_{l,1,1}$:
\begin{eqnarray*}
D_l(w)=r_{l-1}(x_1)-T_l(w)=\\
=r_{l-1}(x_1)-R_{l,1}(w)\cdot\Gamma_{l,1,1}(w)-\left(\sum\limits_{j=2}^{k-l}R_{l,j}(w)\cdot\Gamma_{l,1,j}(w)+\xi_{k,n}\cdot\Gamma_{l,1,k-l}(w)\right)=\\
=c_l(x_1)+r_l(x_2)\cdot (x_1-l)-\left(\sum\limits_{j=2}^{k-l}R_{l,j}(w)\cdot\Gamma_{l,1,j}(w)+\xi_{k,n}\cdot\Gamma_{l,1,k-l}(w)\right),
\end{eqnarray*}
where $x_2$ denotes the second letter of $w$ in the case $l<k-1$ and $x_2:=n-1$ in the case $l=k-1$. In particular, if $l=k-1$, then  $\Gamma_{l,1,k-l}(w)=\Gamma_{k-1,1,1}(w)=x_1-l$, and hence
\begin{eqnarray*}
D_l(w)&=&c_l(x_1)+(x_1-l)\cdot (r_{k-1}(n-1)-\xi_{k,n})=\\
&=&c_l(x_1)+(x_1-l)\cdot D_k(\epsilon)=c_l(x_1)+(x_1-l)\cdot D_{l+1}(\sigma(w)).
\end{eqnarray*}
If $l<k-1$, then $\Gamma_{l,1,j}(w)=(x_1-l)\cdot \Gamma_{l,2,j}(w)$ for every $j\in[k-l]$, and hence
$$
D_l(w)=c_l(x_1)+r_l(x_2)\cdot (x_1-l)-(x_1-l)\cdot \Lambda,
$$
where
$$
\Lambda:=\sum\limits_{j=2}^{k-l}R_{l,j}(w)\cdot\Gamma_{l,2,j}(w)+\xi_{k,n}\cdot\Gamma_{l,2,k-l}(w).
$$
Since $R_{l,j}(w)=R_{l+1,j-1}(\sigma(w))$ and $\Gamma_{l,2,j}(w)=\Gamma_{l+1,1,j-1}(\sigma(w))$ for every $j\in[1,k-l]$, we obtain
\begin{eqnarray*}
\Lambda=\sum\limits_{j=2}^{k-l}R_{l+1,j-1}(\sigma(w))\cdot\Gamma_{l+1,1,j-1}(\sigma(w))+\xi_{k,n}\cdot\Gamma_{l+1,1,k-l-1}(\sigma(w))=\\
=\sum\limits_{j=1}^{k-l-1}R_{l+1,j}(\sigma(w))\cdot\Gamma_{l+1,1,j}(\sigma(w))+\xi_{k,n}\cdot\Gamma_{l+1,1,k-l-1}(\sigma(w))=T_{l+1}(\sigma(w)).
\end{eqnarray*}
Consequently, we have:
\begin{eqnarray*}
D_l(w)=c_l(x_1)+(x_1-l)\cdot r_l(x_2)-(x_1-l)\cdot T_{l+1}(\sigma(w))=\\
=c_l(x_1)+(x_1-l)(r_l(x_2)-T_{l+1}(\sigma(w)))=c_l(x_1)+(x_1-l)\cdot D_{l+1}(\sigma(w)),
\end{eqnarray*}
which finishes the proof of the item~(ii).  The item~(iii) directly follows from the items~(i)--(ii) and from the inequalities $c_l(x)\geq 0$ for all $l\in[1,k]$, $x\in[l,n]$.
\qed
\end{proof}

\begin{lemma}\label{prop9}
Let $w=(x_1, \ldots, x_k)\in X^{(k)}$ be arbitrary. Then for every $l\in[k]$ there exist $A\in\mathbb{R}_+$, $B\in\mathbb{R}$ which do not depend on the letter $x_l$ and such that $T(w)=A\cdot F_{l, \sigma^l(w)}(x_l)+B$.
\end{lemma}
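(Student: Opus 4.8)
We want to show that, for a fixed position $l\in[k]$, the value $T(w)$ depends on the letter $x_l$ only through an affine function of $F_{l,\sigma^l(w)}(x_l)$ with a positive leading coefficient. The natural tool is Lemma~\ref{prop8}~(ii), which expresses the map $D_l$ recursively by peeling off one letter at a time. Since $D_0=-T$, applying this recursion $l$ times should express $T(w)=-D_0(w)$ as a nested affine expression in the letters $x_1,\dots,x_l$; the coefficient multiplying the ``innermost'' occurrence of $x_l$ will be a product of factors $(x_j-j+1)$ for $j<l$, which is strictly positive because $x_j\in X_j=\{j,\dots,n-1\}$ forces $x_j-j+1\geq 1$. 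This is exactly the positivity we need for $A$.

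First I would iterate Lemma~\ref{prop8}~(ii) starting from $D_0$. By an easy induction on $m$ (entirely parallel to the induction giving~(\ref{erex}) in the proof of Proposition~\ref{proo8}) one gets, for $0\le m\le l-1$,
\begin{equation*}
D_0(w)=\sum_{i=0}^{m-1}\left(\prod_{j=1}^{i}(x_j-j+1)\right)c_i(x_{i+1})+\left(\prod_{j=1}^{m}(x_j-j+1)\right)D_m(\sigma^m(w)).
\end{equation*}
Taking $m=l-1$ isolates the dependence on $x_l$: all terms $c_i(x_{i+1})$ with $i\le l-2$ involve only $x_1,\dots,x_{l-1}$, and the tail is $\left(\prod_{j=1}^{l-1}(x_j-j+1)\right)D_{l-1}(\sigma^{l-1}(w))$, where $\sigma^{l-1}(w)=(x_l,x_{l+1},\dots,x_k)$ has $x_l$ as its first letter. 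Now apply Lemma~\ref{prop8}~(ii) once more to this last factor:
\begin{equation*}
D_{l-1}(\sigma^{l-1}(w))=(x_l-l+1)\cdot D_l(\sigma^l(w))+c_{l-1}(x_l).
\end{equation*}
Here $D_l(\sigma^l(w))$ does not depend on $x_l$ (it is a function of $(x_{l+1},\dots,x_k)$ only, with the convention $x_{k+1}=n-1$ when $l=k$), and by definition~(\ref{fffd}) we have $F_{l,\sigma^l(w)}(x_l)=-c_{l-1}(x_l)-x_l\cdot D_l(\sigma^l(w))$, so $c_{l-1}(x_l)+(x_l-l+1)D_l(\sigma^l(w))=-F_{l,\sigma^l(w)}(x_l)+(1-l)D_l(\sigma^l(w))$.

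Substituting back and using $T(w)=-D_0(w)$, I would collect terms: set $P:=\prod_{j=1}^{l-1}(x_j-j+1)$, which is $>0$ and independent of $x_l$, and $Q:=-\sum_{i=0}^{l-2}\left(\prod_{j=1}^{i}(x_j-j+1)\right)c_i(x_{i+1})$, also independent of $x_l$. Then
\begin{equation*}
T(w)=Q-P\cdot D_{l-1}(\sigma^{l-1}(w))=P\cdot F_{l,\sigma^l(w)}(x_l)+\bigl(Q-(1-l)P\cdot D_l(\sigma^l(w))\bigr),
\end{equation*}
so we may take $A:=P\in\mathbb{R}_+$ and $B:=Q-(1-l)P\cdot D_l(\sigma^l(w))$; both are independent of $x_l$, and $A>0$ since each factor $x_j-j+1\ge1$. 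The only point requiring a little care is the boundary case $l=k$, where one must use the conventions built into $D_k$ and $\Gamma_{l,1,k-l}$ and check that the term $\xi_{k,n}\cdot\Gamma_{l,1,k-l}$ is correctly absorbed — but this is already handled by Lemma~\ref{prop8}~(i)--(ii), so no genuine obstacle arises. Indeed, the whole argument is a bookkeeping exercise on top of Lemma~\ref{prop8}; the main (minor) subtlety is simply keeping track of which quantities depend on $x_l$ and verifying the positivity of the accumulated product, which is immediate from $x_j\in X_j$.
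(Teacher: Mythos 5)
Your proof is correct, and it reaches the same constants (your $A=\prod_{j=1}^{l-1}(x_j-j+1)$ is exactly the paper's $A=\Gamma_{0,1,l-1}(w)$) by a genuinely different route. The paper works directly from the definition of $T=T_0$ as a sum over the maps $R_{0,j}$ and $\Gamma_{0,1,j}$: it splits that sum into three blocks according to whether $j\leq l-2$, $j\in\{l-1,l\}$, or $j\geq l+1$, uses Lemma~\ref{prop7}~(ii) to pull $-c_{l-1}(x_l)-x_l\cdot r_{l-1}(x_{l+1})$ out of the middle block, and identifies the tail block with $T_l(\sigma^l(w))$ via the shift identities $R_{0,j}(w)=R_{l,j-l}(\sigma^l(w))$ and $\Gamma_{0,l+1,j}(w)=\Gamma_{l,1,j-l}(\sigma^l(w))$, finally assembling $D_l=r_{l-1}-T_l$. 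You instead never touch the $R$, $\Gamma$ machinery and argue entirely through the recursion $D_m(v)=(v_1-m)D_{m+1}(\sigma(v))+c_m(v_1)$ of Lemma~\ref{prop8}~(ii), telescoping it from $D_0=-T$ down to position $l-1$ (the same induction as~(\ref{erex}) anchored at $0$) and then applying it once more to expose $x_l$. This is legitimate and non-circular, since Lemma~\ref{prop8} is proved independently of Lemma~\ref{prop9}; in effect the sum-splitting and shift identities that the paper redoes inside the proof of Lemma~\ref{prop9} are the ones already carried out in the proof of Lemma~\ref{prop8}~(ii), so your argument buys brevity by reusing that work, at the cost of being less self-contained at the level of the definitions of $T_l$. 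Your bookkeeping checks out: the terms with $i\leq l-2$ and the product $P$ involve only $x_1,\dots,x_{l-1}$, $D_l(\sigma^l(w))$ involves only $x_{l+1},\dots,x_k$, the identity $c_{l-1}(x_l)+(x_l-l+1)D_l(\sigma^l(w))=-F_{l,\sigma^l(w)}(x_l)+(1-l)D_l(\sigma^l(w))$ is a correct rearrangement of~(\ref{fffd}), and $A=P>0$ because $x_j\geq j$ for $x_j\in X_j$ (with the empty product giving $A=1$ when $l=1$). The boundary case $l=k$ is indeed absorbed by Lemma~\ref{prop8}~(i)--(ii) as you note.
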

\begin{proof}
Let us fix $l\in[k]$ and let us  define $A:=\Gamma_{0,1,l-1}(w)$. Then we see by the definition of the map $\Gamma_{0,1,l-1}$ that $A$ does not depend on $x_l$ and $A>0$. By the definition of the map $T=T_0$, we have $T(w)=B_1+B_2+B_3$, where
\begin{eqnarray*}
B_1&:=&\sum_{j=1}^{l-2}R_{0,j}(w)\cdot\Gamma_{0,1,j}(w),\\
B_2&:=&\sum_{j=l-1}^lR_{0,j}(w)\Gamma_{0,1,j}(w)=AR_{0,l-1}(w)+R_{0,l}(w)\Gamma_{0,1,l}(w),\\
B_3&:=&\sum_{j=l+1}^{k}R_{0,j}(w)\Gamma_{0,1,j}(w)+\xi_{k,n}\Gamma_{0,1,k}(w).
\end{eqnarray*}
By the definitions of the maps $R_{0,j}$, $\Gamma_{0,1,j}$ ($j\in[l-2]$), we see that $B_1$ does not depend on $x_l$. Since $\Gamma_{0,1,j}(w)=A(x_l-l+1)\Gamma_{0,l+1,j}(w)$ for every $j\in[l-1,k]$, we obtain $T(w)=B_1+AB_2'+AB_3'$, where
\begin{eqnarray*}
B_2'&:=&R_{0, l-1}(w)+(x_l-l+1)\cdot R_{0,l}(w),\\
B_3'&:=&(x_l-l+1)\cdot B_4,\\
B_4&:=&\sum\limits_{j=l+1}^k R_{0,j}(w)\cdot\Gamma_{0,l+1,j}(w)+\xi_{k,n}\cdot \Gamma_{0,l+1,k}(w).
\end{eqnarray*}
Next, by the definitions of the maps $R_{0,l-1}$, $R_{0,l}$ and by  Lemma~\ref{prop7}~(ii), we obtain
$$
B_2'=B_2''-c_{l-1}(x_l)-x_l\cdot r_{l-1}(x_{l+1}),
$$
where $B_2'':=r_{l-2}(x_{l-1})+(l-1)\cdot r_{l-1}(x_{l+1})$ does not depend on $x_l$. Further, since $R_{0,j}(w)=R_{l, j-l}(\sigma^l(w))$ and $\Gamma_{0,l+1,j}(w)=\Gamma_{l,1, j-l}(\sigma^l(w))$ for every $j\in[l-1,k]$, we obtain
\begin{eqnarray*}
B_4&=&\sum\limits_{j=l+1}^k R_{l,j-l}(\sigma^l(w))\cdot\Gamma_{l,1,j-l}(\sigma^l(w))+\xi_{k,n}\cdot \Gamma_{l,1,k-l}(\sigma^l(w))=\\
&=&\sum\limits_{j=1}^{k-l} R_{l,j}(\sigma^l(w))\cdot\Gamma_{l,1,j}(\sigma^l(w))+\xi_{k,n}\cdot \Gamma_{l,1,k-l}(\sigma^l(w))=T_l(\sigma^l(w)).
\end{eqnarray*}
Hence
$$
B_3'=(x_l-l+1)\cdot B_4=x_l\cdot T_l(\sigma^l(w))-B_3'',
$$
where $B_3'':=(l-1)\cdot T_l(\sigma^l(w))$ does not depend on $x_l$. We can now write
\begin{eqnarray*}
T(w)&=&B_1+A\cdot (B_2'+B_3')=\\
&=&B_1+A\cdot (B_2''-c_{l-1}(x_l)-x_l\cdot r_{l-1}(x_{l+1})+x_l\cdot T_l(\sigma^l(w))-B_3'')=\\
&=&B_1+A\cdot (B_2''-c_{l-1}(x_l)-x_l\cdot D_l(\sigma^l(w))-B_3'')=\\
&=&B_1+A\cdot(B_2''-B_3'')+A\cdot F_{l,\sigma^l(w)}(x_l)=B+A\cdot F_{l,\sigma^l(w)}(x_l),
\end{eqnarray*}
where $B:=B_1+A\cdot(B_2''-B_3'')$ does not depend on $x_l$.\qed
\end{proof}

\section{Conclusion}

In the present paper, we obtained the analytic formulae for an optimal sequence $(s_1, \ldots, s_k)$ in the Gusein-Zade version (\cite{1}) of a generalized secretary problem. In this  problem, the interviewer would like to choose one of the $k$ best of $n$ candidates arriving in random order and the stopping rule is based on the relative ranks of the successive arrivals. For any sequence $(s_1, \ldots, s_k)$ describing the optimal policy, we also found the analytic formula for  the probability of success when using the  policy with this sequence. Our original approach is purely elementary and   bases on the combinatorial analysis of the problem. The obtained formulae reveal the possibility of an extension to an arbitrary value of $k$ for closed expressions describing the elements of an optimal sequence. Until now such  expressions were derived only for  $k\leq 3$. Since the maps $d_l$ in the inequalities (\ref{ineqqq}) describing the optimal sequence are all non-increasing, our formula reduces the determination of elements in this sequence to  solving a system of $k$ equations. In other words, we need to solve a  recurrence with the number of steps  bounded by $k$, which is substantially more advantageous than the implicit solution via computing the optimum from the known mechanism of dynamic or linear programming. On the other hands, in recent years, the linear programming approach was discovered  to analyze a broader class of secretary problems. For example, in \cite{7} the authors consider a so-called  $J$-choice $K$-best secretary problem (the case $J=1$ was the subject of the present paper), where finding of an optimal solution reduces to solving the corresponding linear program. In~\cite{10} the authors use linear programming but to the so-called continuous and infinite models of the secretary problem (see also \cite{8,9}). In~\cite{17} even a more general problem is studied via this technique -- a so called shared $Q$-queue $J$-choice $K$-best secretary problem. Therefore, it seems natural to analyze and develop our combinatorial approach also for wider classes of secretary problems, which  might result in finding some simplifications in the corresponding  formulae. The construction of the optimal sequence  from Theorem~\ref{mt} could also be  applied in the study of the limits $\tau_l(k):=\lim_{n\to\infty}s_l/n$ ($1\leq l\leq k$) and their behaviour. This could help in solving some (according to our knowledge) open questions concerning these limits, such as (see also \cite{3,13}): Is it true that $\tau_1(k)$  monotonically decreases with $k$?

\end{document}